\def\B'c{{\mathcal{B'}}}
\def\U'c{{\mathcal{U'}}}
\def\opn#1#2{\def#1{\operatorname{#2}}} 
\opn\chara{char}
\opn\length{\ell}
\opn\projdim{proj\,dim}
\opn\injdim{inj\,dim}
\opn\ini{in}
\opn\rank{rank}
\opn\depth{depth}
\opn\height{ht}
\opn\embdim{emb\,dim}
\opn\codim{codim}
\opn\Tr{Tr}
\opn\bigrank{big\,rank}
\opn\superheight{superheight}\opn\lcm{lcm}
\opn\trdeg{tr\,deg}%
\opn\reg{reg}
\opn\lreg{lreg}
\opn\set{set}
\opn\supp{Supp}
\opn\shad{Shad}
\opn\del{del}
\opn\div{div}
\opn\Div{Div}
\opn\cl{cl}
\opn\Cl{Cl}
\opn\Spec{Spec}
\opn\Supp{Supp}
\opn\supp{supp}
\opn\Sing{Sing}
\opn\Ass{Ass}
\opn\Ann{Ann}
\opn\Rad{Rad}
\opn\Soc{Soc}
\opn\Ker{Ker}
\opn\Coker{Coker}
\opn\Im{Im}
\opn\Hom{Hom}
\opn\Tor{Tor}
\opn\Ext{Ext}
\opn\End{End}
\opn\Aut{Aut}
\opn\id{id}
\opn\nat{nat}
\opn\GL{GL}
\opn\SL{SL}
\opn\mod{mod}
\opn\ord{ord}
\opn\aff{aff}
\opn\con{conv}
\opn\relint{relint}
\opn\st{st}
\opn\lk{lk}
\opn\cn{cn}
\opn\core{core}
\opn\vol{vol}
\opn\gr{gr}
\def\pot#1#2{#1[\kern-0.28ex[#2]\kern-0.28ex]}
\opn\dirlim{\underrightarrow{\lim}}
\opn\invlim{\underleftarrow{\lim}}
\def\pnt{{\raise0.5mm\hbox{\large\bf.}}}
\def\twoline#1#2{\aoverb{\scriptstyle {#1}}{\scriptstyle {#2}}}
\newcommand{\aoverb}[2]{{\genfrac{}{}{0pt}{1}{#1}{#2}}}
\def\Implies{\ifmmode\Longrightarrow \else
     \unskip${}\Longrightarrow{}$\ignorespaces\fi}
\def\implies{\ifmmode\Rightarrow \else
     \unskip${}\Rightarrow{}$\ignorespaces\fi}
\def\iff{\ifmmode\Longleftrightarrow \else
     \unskip${}\Longleftrightarrow{}$\ignorespaces\fi}
\newtheorem{Theorem}{Theorem}[section]
\newtheorem{Lemma}[Theorem]{Lemma}
\newtheorem{Corollary}[Theorem]{Corollary}
\newtheorem{Proposition}[Theorem]{Proposition}
\newtheorem{Remark}[Theorem]{Remark}
\newtheorem{Example}[Theorem]{Example}
\newtheorem{Definition}[Theorem]{Definition}
\let\epsilon=\varepsilon
\let\phi=\varphi
\let\kappa=\varkappa
\numberwithin{equation}{section}
\address{Faculty of Mathematics and Computer Science, Ovidius University, Bd.\ Mamaia 124,
 900527 Constanta, Romania,} \email{olteanuandageorgiana@gmail.com} 
\thanks{\footnotesize The author was supported by the grant CEX 05-D11-11/2005 and by the CNCSIS grant TD 507/2007.}
\title{A note on the subword complexes in Coxeter groups}
\author{Anda Olteanu}
\begin{document}

\begin{abstract} 
We prove that the Stanley--Reisner ideal of the Alexander dual of the subword complexes in Coxeter groups has linear quotients with respect to the lexicographical order of the minimal monomial generators. As a consequence, we obtain a shelling order on the facets of the subword complex. We relate some invariants of the subword complexes or of their dual with invariants of the word. For a particular class of subword complexes, we prove that the Stanley--Reisner ring is a complete intersection ring.
\end{abstract}
\maketitle

\section*{Introduction}
Subword complexes were introduced by A. Knutson and E. Miller \cite{KM} for the study of Schubert polynomials and combinatorics of the determinantal ideals. Many properties of the subword complexes in Coxeter groups are obtained by using the Demazure algebra and the Demazure product.

A. Knutson and E. Miller proved that subword complexes in Coxeter groups are vertex-decomposable \cite{KnMi}. Since any vertex-decomposable simplicial complex is shellable \cite{BiPr}, we have that subword complexes in Coxeter groups are shellable. 

In this paper we prove directly that subword complexes in Coxeter groups are shellable by using the Alexander duality. As a consequence, we get a shelling on the facets of the subword complex. We also study the Stanley--Reisner ideal of the Alexander dual for a particular class of subword complexes. For this class, we prove that the Stanley--Reisner ring is a complete intersection ring.

The paper is structured as follows: In the first section, we prove that the Stanley--Reisner ideal of the Alexander dual of the subword complexes in Coxeter groups has linear quotients with respect to the lexicographical order of its minimal monomial generators. As a consequence, we obtain a shelling on the facets of the subword complex. Next, we establish upper bounds for the projective dimension of the Stanley--Reisner ideal of the Alexander dual associated to a subword complex and for the regularity of the Stanley--Reisner ideal.

In the second section, we study a special class of subword complexes.

\section{Subword complexes in Coxeter groups and Alexander duality}

Let $(W,S)$ be an arbitrary Coxeter system consisting of a Coxeter group, $W$, and a set of simple reflections, $S$, that generates $W$ (see \cite{BjBr} or \cite{Hu} for background).

\begin{Definition}\cite{KM}\rm $\ $A \textit{word} $Q$ of size $n$ is an ordered sequence $Q=(\sigma_1,\ldots,\sigma_n)$ of elements of $S$. An ordered subsequence $P$ of $Q$ is called a \textit{subword} of $Q$.
\begin{enumerate}
	\item $P$ \textit{represents} $\pi\in W$ if the ordered product of the simple reflections in $P$ is a reduced expression for $\pi$.
	\item $P$ \textit{contains} $\pi\in W$ if some subsequence of $P$ represents $\pi$.
\end{enumerate}
 The \it{subword complex}\rm $\ \Delta(Q,\pi)$ is the set of all the subwords $Q\setminus P$ such that $P$ contains $\pi$.
\end{Definition}

\begin{Proposition} Let $\Delta$ be the subword complex $\Delta(Q,\pi)$ and let $n$ be the size of $Q$. Then $\projdim(k[\Delta])=\ell(\pi)$.
\end{Proposition}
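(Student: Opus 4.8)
The plan is to combine the Auslander--Buchsbaum formula with two facts about the subword complex: the common size of its facets, and its Cohen--Macaulayness. Throughout I write $S=k[x_1,\dots,x_n]$, so that $k[\Delta]=S/I_\Delta$, and I assume $\Delta$ is nonempty, i.e.\ that $Q$ contains $\pi$ (otherwise $k[\Delta]=0$ and there is nothing to prove).

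\emph{Step 1: the dimension.} First I would identify the facets of $\Delta(Q,\pi)$. A face of $\Delta$ is a subword $Q\setminus P$ with $P$ containing $\pi$, and such a face is maximal exactly when $P$ is minimal, among subwords of $Q$, with respect to containing $\pi$. If $P$ contains $\pi$, then by definition some subsequence $P'$ of $P$ represents $\pi$, and minimality of $P$ forces $P'=P$; hence $P$ represents $\pi$ and $|P|=\ell(\pi)$. Conversely, if $P$ represents $\pi$ then every proper subsequence of $P$ has fewer than $\ell(\pi)$ letters and so cannot contain $\pi$, so $P$ is minimal with this property. Thus the facets of $\Delta$ are precisely the complements $Q\setminus P$ of the subwords $P$ of $Q$ that represent $\pi$; in particular they all have cardinality $n-\ell(\pi)$, so $\Delta$ is pure with $\dim\Delta=n-\ell(\pi)-1$ and $\dim k[\Delta]=n-\ell(\pi)$.

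\emph{Step 2: depth, and the conclusion.} Here I would invoke the theorem of Knutson and Miller \cite{KnMi} that $\Delta(Q,\pi)$ is vertex-decomposable, hence shellable \cite{BiPr}, hence Cohen--Macaulay over $k$; therefore $\depth k[\Delta]=\dim k[\Delta]=n-\ell(\pi)$. Since $\depth S=n$, the Auslander--Buchsbaum formula gives
$$\projdim k[\Delta]=\depth S-\depth k[\Delta]=n-(n-\ell(\pi))=\ell(\pi),$$
as claimed.

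\emph{On the main point.} The one place that needs an external input rather than a direct computation is the Cohen--Macaulay property used in Step 2, which I would simply quote from Knutson--Miller; the facet description in Step 1 is just an unwinding of the definition of the subword complex, and the rest is homological bookkeeping. (Alternatively, once the linear-quotients theorem of this paper is established, Cohen--Macaulayness of $\Delta$ follows from the Eagon--Reiner theorem, but using that here would reverse the logical order of the paper.) The only care required is with the degenerate cases: $\pi=e$, where $\Delta$ is the full simplex on $[n]$ and $\projdim k[\Delta]=0=\ell(e)$, and $Q$ not containing $\pi$, which is excluded above.
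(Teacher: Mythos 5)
Your proposal is correct and follows essentially the same route as the paper: both identify the facets as complements of subwords representing $\pi$ (so $\Delta$ is pure of dimension $n-\ell(\pi)-1$), invoke shellability from Knutson--Miller to get Cohen--Macaulayness, and conclude via Auslander--Buchsbaum. You merely spell out the facet identification and the depth computation that the paper leaves implicit.
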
 

\begin{proof} By \cite[Theorem 2.5]{KnMi}, $\Delta$ is shellable. Since any subword $P\subseteq Q$ that represents $\pi$ is of size $\ell(\pi)$ and $\Delta$ is pure, we have that $\dim(\Delta)=n-\ell(\pi)-1$, so $\dim(k[\Delta])=n-\ell(\pi)$. Therefore, $\projdim(k[\Delta])=\ell(\pi)$.
\end{proof}

Let $Q=(\sigma_1,\ldots,\sigma_n)$ be a word in $W$, $\sigma_i\in S$ for all $1\leq i\leq n$ and $\pi$ an element in $W$. We consider the polynomial ring in $n$ variables over a field $k$, $k[x_1,\ldots,x_n]$, where $n$ is the size of the word $Q$. Let $\Delta(Q,\pi)$ be the subword complex. 
We aim to determine a shelling order on the facets of $\Delta(Q,\pi)$. For this purpose, we consider the Stanley--Reisner ideal of the Alexander dual associated to $\Delta(Q,\pi)$.

Let $\Delta$ be a simplicial complex on the vertex set $[n]$. We recall that
	\[I_{\Delta^{\vee}}=(\mathbf{x}_{[n]\setminus F}\ \mid\ F\in\mathcal{F}(\Delta)),
\]
where we denote by $\mathbf{x}_F$ the monomial $\prod\limits_{i\in F}x_i$ and by $\mathcal{F}(\Delta)$ the set of all the facets of $\Delta$. In the special context of the subword complexes, we obtain that
	\[I_{\Delta^{\vee}}=(\mathbf{x}_P\ \mid\ P\subseteq Q,\ P\ \mbox{represents}\ \pi).
\]

Let $R=k[x_1,\ldots,x_n]$ be the polynomial ring in $n$ variables over a field $k$ and $I$ a monomial ideal of $R$. $I$ is called an \textit{ideal with linear quotients} if there exists an order of the monomials from the minimal monomial generating system $u_1,\ldots,u_m$ such that, for all $i\geq2$ and for all $j<i$, there exist an integer $l\in[n]$ and an integer $1\leq k<i$ such that $u_k/[u_i,u_k]=x_l$ and $x_l$ divides $u_j/[u_i,u_j]$, where, for two monomials $u$ and $v$, we denote $[u,v]:=\gcd(u,v)$. 

Since $\Delta=\Delta(Q,\pi)$ is shellable (\cite[Theorem 2.5]{KnMi}) the Stanley--Reisner ideal of the Alexander dual of $\Delta$, $I_{\Delta^{\vee}}$, has linear quotients \cite[Theorem 1.4]{HeHiZh}. In order to obtain a shelling on the facets of $\Delta$, we have to define an order on the monomials from the minimal monomial system of generators of $I_{\Delta^{\vee}}$ such that $I_{\Delta^{\vee}}$ has linear quotients with respect to this order.

Firstly, let us fix some notations. Henceforth, we will write $Q\setminus \sigma_i$ for the word of size $n-1$ obtained from $Q$ by omitting $\sigma_i$, that is $Q\setminus \sigma_i=(\sigma_1,\ldots,\sigma_{i-1},\sigma_{i+1},\ldots,\sigma_n).$ Also, we will denote by $"\succ"$ the Bruhat order on $W$. 

For $P=(\sigma_{i_1},\ldots, \sigma_{i_m})$, $m\leq n$, a subword of $Q$, we will denote by $\mathbf{x}_{P}$ the monomial $x_{i_1}\ldots x_{i_{m}}$ in $k[x_1,\ldots,x_n]$ and by $\delta(P)$ the Demazure product of the word $P$ (see \cite{KnMi}). We use $G(I)$ for the minimal monomial generating set of the monomial ideal $I$. 

Now we can state the main result of this section:

\begin{Theorem}\label{linquot} Let $\Delta$ be the subword complex $\Delta(Q,\pi)$. Then the Stanley--Reisner ideal of the Alexander dual, $I_{\Delta^{\vee}}$, has linear quotients with respect to the lexicographi- cal order of the minimal monomial generators.
\end{Theorem}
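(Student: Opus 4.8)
The plan is to verify the linear quotients property directly from the definition, working with the monomial generators $G(I_{\Delta^\vee}) = \{\mathbf{x}_P \mid P \subseteq Q,\ P \text{ represents } \pi\}$ ordered lexicographically. So let $\mathbf{x}_{P_1} >_{\mathrm{lex}} \cdots >_{\mathrm{lex}} \mathbf{x}_{P_m}$ be this ordering. Fix $i \geq 2$ and $j < i$; I must produce an index $l \in [n]$ and an index $k < i$ such that $\mathbf{x}_{P_k}/[\mathbf{x}_{P_i},\mathbf{x}_{P_k}] = x_l$ and $x_l \mid \mathbf{x}_{P_j}/[\mathbf{x}_{P_i},\mathbf{x}_{P_j}]$. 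Since $P_i$ and $P_j$ both represent $\pi$, they have the same size $\ell(\pi)$, so the symmetric difference $P_i \triangle P_j$ has an equal number of elements on each side. The condition $x_l \mid \mathbf{x}_{P_j}/[\mathbf{x}_{P_i},\mathbf{x}_{P_j}]$ just says that position $l$ lies in $P_j \setminus P_i$. So the real task is: given $j < i$, find a single position $l \in P_j \setminus P_i$ and a subword $P_k$ representing $\pi$ with $P_k \setminus P_i = \{l\}$ (a single element) and $\mathbf{x}_{P_k} >_{\mathrm{lex}} \mathbf{x}_{P_i}$ (so that $k < i$).

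The natural candidate for $l$ is the \emph{smallest} index in $P_j \setminus P_i$; call it $l$, occurring at position $i_1$ in the words, and let $P_i = (\sigma_{j_1}, \ldots, \sigma_{j_r})$ with $r = \ell(\pi)$. The idea is to swap: remove from $P_i$ some position and insert position $l$, obtaining a new subword $P_k$ that still represents $\pi$. Concretely, since $j < i$ in lex order, and $l$ is the smallest element of $P_j \setminus P_i$, every position of $P_i$ that is $< l$ also belongs to $P_j$; I would argue that there is a position $t \in P_i \setminus P_j$ with $t > l$ that can be removed while keeping the word a reduced expression for $\pi$. The key combinatorial input is the exchange condition in Coxeter groups (or equivalently the strong exchange property): if $(\sigma_{j_1}, \ldots, \sigma_{j_r})$ is a reduced word for $\pi$ and we prepend or insert the reflection $\sigma_l$, then either the result is still reduced for a longer element, or exactly one of the existing letters must be deleted to restore a reduced word for $\pi$. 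Running the exchange argument on the insertion of position $l$ into $P_i$ at its proper place yields a unique position $t$ to delete; setting $P_k = (P_i \setminus \{t\}) \cup \{l\}$ gives a subword representing $\pi$ with $P_k \setminus P_i = \{l\}$, hence $\mathbf{x}_{P_k}/[\mathbf{x}_{P_i},\mathbf{x}_{P_k}] = x_l$.

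It then remains to check two things: first that $t > l$ (so that when we compare $\mathbf{x}_{P_k}$ and $\mathbf{x}_{P_i}$ lexicographically, the earlier-differing variable is $x_l$, which appears in $\mathbf{x}_{P_k}$ but not $\mathbf{x}_{P_i}$, forcing $\mathbf{x}_{P_k} >_{\mathrm{lex}} \mathbf{x}_{P_i}$ and hence $k < i$); and second that $l \in P_j \setminus P_i$ indeed divides $\mathbf{x}_{P_j}/[\mathbf{x}_{P_i},\mathbf{x}_{P_j}]$, which is immediate from the choice of $l$. For the claim $t > l$: all positions of $P_i$ below $l$ lie in $P_j$ (by minimality of $l$ in $P_j \setminus P_i$), and one shows that the exchanged letter $t$ cannot be among them — the prefix of $P_i$ before position $l$ agrees with a prefix of $P_j$, and inserting $\sigma_l$ into a common reduced prefix plus the fact that $P_j$ (with $\sigma_l$ already present) represents $\pi$ forces the deletion to happen at or after $l$; since $t \neq l$ we get $t > l$. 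The step I expect to be the main obstacle is precisely this last point — pinning down that the position deleted by the exchange condition is strictly larger than $l$, which requires carefully coordinating the exchange argument with the lexicographic hypothesis $j < i$ rather than just invoking the exchange property abstractly. Once that is in place, the linear quotients condition holds with the exhibited $l$ and $k$, completing the proof.
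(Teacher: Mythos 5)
Your strategy coincides with the paper's: take $l=\min(P_j\setminus P_i)$, insert $\sigma_l$ into $P_i$ at its proper place to form a word $T$ of size $\ell(\pi)+1$, and delete one other letter to obtain a subword $P_k$ representing $\pi$ with $P_k\setminus P_i=\{l\}$ and $\mathbf{x}_{P_k}>_{lex}\mathbf{x}_{P_i}$. But as written the argument has a genuine gap at its central step. You set up the dichotomy ``either $T$ is still reduced for a longer element, or exactly one letter can be deleted to restore a reduced word for $\pi$,'' and then silently assume the second alternative holds by saying that ``running the exchange argument\dots yields a unique position $t$ to delete.'' If $T$ were a reduced word (for some $\tau$ with $\ell(\tau)=\ell(\pi)+1$), no single deletion would return a representative of $\pi$ and your construction would produce nothing; and inserting a simple reflection into a reduced word certainly can produce a reduced word in general, so this case must be excluded by an argument, not by fiat. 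This is exactly where the paper uses the reduced expression coming from $P_j$: since $P_i$ and $P_j$ agree in their first $l-1$ letters, the two reduced expressions for $\pi$ give the identity $\sigma_{j_l}\sigma_{i_l}\cdots\sigma_{i_{\ell(\pi)}}=\sigma_{j_{l+1}}\cdots\sigma_{j_{\ell(\pi)}}$, which shows the ordered product of $T$ can be written with $\ell(\pi)-1$ simple reflections, so $T$ cannot be reduced; hence $\delta(T)=\pi$ and \cite[Lemma 3.5(2)]{KnMi} supplies the unique deletable letter $\sigma_{i_t}$. (Equivalently, that identity is precisely the length-drop hypothesis $\ell(\sigma_{j_l}\cdot\sigma_{i_l}\cdots\sigma_{i_{\ell(\pi)}})<\ell(\sigma_{i_l}\cdots\sigma_{i_{\ell(\pi)}})$ you would need before the exchange condition applies at all.) Your sketch invokes $P_j$ only to locate the deleted letter, not to guarantee its existence.

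The second point, that the deleted position satisfies $i_t>j_l$ (so that $\mathbf{x}_{P_k}=x_{j_l}\mathbf{x}_{P_i}/x_{i_t}>_{lex}\mathbf{x}_{P_i}$ and hence $k<i$), you correctly identify as the main remaining obstacle, but you only assert that the common prefix ``forces the deletion to happen at or after $l$.'' The paper proves this by a second contradiction argument: if $i_t<j_l$, then comparing the reduced expressions for $P_i$ and for $T\setminus\sigma_{i_t}$ yields $\sigma_{i_t}\sigma_{i_{t+1}}\cdots\sigma_{i_{l-1}}=\sigma_{i_{t+1}}\cdots\sigma_{i_{l-1}}\sigma_{j_l}$, and substituting this into the reduced expression for $\pi$ coming from $P_j$ exhibits $\pi$ as a product of $\ell(\pi)-2$ simple reflections, which is impossible. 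Until both of these points are argued, the proposal is an outline of the paper's proof rather than a proof.
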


\begin{proof} Let $G(I_{\Delta^{\vee}})=\{\mathbf{x}_{P_1},\ldots,\mathbf{x}_{P_{r}}\}$. We assume that $\mathbf{x}_{P_1}>_{lex}\ldots>_{lex}\mathbf{x}_{P_{r}}$. Note that $P_i\subseteq Q$ and $P_i$ represents $\pi$ for all $1\leq i\leq r$.

 We have to prove that $I_{\Delta^{\vee}}$ has linear quotients with respect to the sequence of monomials $\mathbf{x}_{P_1},\ldots,\mathbf{x}_{P_r}$, that is, for all $i\geq2$ and for all $j<i$, there exist an integer $l\in[n]$ and an integer $1\leq k<i$ such that $\mathbf{x}_{P_k}/[\mathbf{x}_{P_i},\mathbf{x}_{P_k}]=x_l$ and $x_l$ divides $\mathbf{x}_{P_j}/[\mathbf{x}_{P_i},\mathbf{x}_{P_j}]$. 

Let us fix $i\geq 2$ and $j<i$. Since $P_i,\ P_j$ represent $\pi$, they are subwords of $Q$ of size $\ell(\pi)$. Let $P_i=(\sigma_{i_1},\ldots,\sigma_{i_{\ell(\pi)}})$, $P_j=(\sigma_{j_1},\ldots,\sigma_{j_{\ell(\pi)}})$ and let $l\in [n]$ be an integer such that $i_t=j_t$ for all $1\leq t\leq l-1$ and $j_l<i_l$. Such an integer exists since $j<i$, that is $\mathbf{x}_{P_j}>_{lex}\mathbf{x}_{P_i}$.

Let $T$ be the subword of $Q$ of size $|T|=\ell(\pi)+1$ obtained from $P_i$ by adding $\sigma_{j_l}$ between $\sigma_{i_{l-1}}$ and $\sigma_{i_l}$, that is $$T=(\sigma_{i_1},\ldots,\sigma_{i_{l-1}},\sigma_{j_l},\sigma_{i_l},\ldots,\sigma_{i_{\ell(\pi)}}).$$ Since $T$ contains $\pi$, we have that $\delta(T)\succeq\pi$ (\cite[Lemma 3.4(1)]{KnMi}).

Let us assume that $\delta(T)\succ\pi$. Hence, $T$ represents an element $\tau\in W$, $\tau\succ\pi$ such that $\ell(\tau)=\ell(\pi)+1$ \cite[Lemma 3.4(3)]{KnMi}. Since $P_i,\ P_j$ represent $\pi$, we have that
	\[\pi=\sigma_{i_1} \ldots \sigma_{i_{l-1}}\sigma_{i_l}\ldots \sigma_{i_{\ell(\pi)}}=\sigma_{j_1} \ldots \sigma_{j_{l-1}}\sigma_{j_l}\ldots \sigma_{j_{\ell(\pi)}}
\]
are reduced expressions for $\pi$ and, by the choice of $l$, we obtain
\begin{eqnarray}\sigma_{j_l}\sigma_{i_l}\ldots \sigma_{i_{\ell(\pi)}}=\sigma_{j_{l+1}}\ldots \sigma_{j_{\ell(\pi)}}.\label{1}
\end{eqnarray}
Now, since $T$ represents $\tau$,
	\[\tau=\sigma_{i_1}\ldots \sigma_{i_{l-1}} \sigma_{j_l}\sigma_{i_l}\ldots \sigma_{i_{\ell(\pi)}}
\]
is a reduced expression for $\tau$. On the other hand, using the equality (\ref{1}), we have that $$\tau=\sigma_{i_1} \ldots \sigma_{i_{l-1}}\sigma_{j_{l+1}}\ldots \sigma_{j_{\ell(\pi)}}.$$ This is impossible since we obtained that $\tau$ can be written as a product of $\ell(\pi)-1$ simple reflections which contradicts the fact that $\ell(\tau)=\ell(\pi)+1$.

Hence, $\delta(T)=\pi$ and there exists a unique $\sigma_{i_t}\neq \sigma_{j_l}$ such that $T\setminus \sigma_{j_l}=P_i$ and $T\setminus \sigma_{i_t}$ represents $\pi$ \cite[Lemma 3.5(2)]{KnMi}. Let us denote $P'=T\setminus \sigma_{i_t}$. We will show that $\mathbf{x}_{P'}>_{lex}\mathbf{x}_{P_i}$ which will end the proof. One may note that $\mathbf{x}_{P\,'}=x_{j_l}\mathbf{x}_P/x_{i_t}$ and $\mathbf{x}_{P\,'}\neq \mathbf{x}_P$ since $j_l\neq i_t$.

Assume by contradiction that $\mathbf{x}_{P'}<_{lex}\mathbf{x}_{P_i}$, that is $i_t<j_l$. Since both $P_i$ and $P\,'$ represent $\pi$, we have that
	\[\pi=\sigma_{i_1}\ldots \sigma_{i_t}\ldots \sigma_{i_{l-1}}\sigma_{i_l}\ldots \sigma_{i_{\ell(\pi)}}=\sigma_{i_1}\ldots \sigma_{i_{t-1}}\sigma_{i_{t+1}}\ldots \sigma_{i_{l-1}}\sigma_{j_l}\sigma_{i_l}\ldots \sigma_{i_{\ell(\pi)}}
\]
are two reduced expressions for $\pi$. The above equality can be written as
	\begin{eqnarray}\sigma_{i_t}\sigma_{i_{t+1}}\ldots \sigma_{i_{l-1}}=\sigma_{i_{t+1}}\ldots \sigma_{i_{l-1}}\sigma_{j_l}.\label{x}
\end{eqnarray}
On the other hand, $P_j$ also represents $\pi$, that is $\pi=\sigma_{j_1}\ldots \sigma_{j_l}\ldots \sigma_{j_{\ell(\pi)}}$. Now, since for all $1\leq k<l$, $i_k=j_k$, using (\ref{x}) we obtain that $$\pi=\sigma_{i_1}\ldots \sigma_{i_{t-1}}\sigma_{i_{t+1}}\ldots \sigma_{i_{l-1}}\sigma_{j_{l+1}}\ldots \sigma_{j_{\ell(\pi)}}.$$ Hence, we get an expression for $\pi$ with $\ell(\pi)-2$ simple reflections, which is impossible. Thus, we must have $\mathbf{x}_{P'}>_{lex}\mathbf{x}_{P_i}$. So there exist a monomial $\mathbf{x}_{P\,'}$ and an integer $j_l\in[n]$ such that $\mathbf{x}_{P\,'}/[\mathbf{x}_{P\,'},\mathbf{x}_{P_i}]=x_{j_l}$ and $x_{j_l}$ divides $\mathbf{x}_{P_{j}}/[\mathbf{x}_{P_i},\mathbf{x}_{P_j}]$ which ends our proof.
\end{proof}

\begin{Example}\label{ex}\rm Let $(S_4,S)$ be the Coxeter system, where $S_4$ is the symmetric group and $S$ is the set of the adjanced transpositions, that is $S=\{s_1=(1,2),\ s_2=(2,3),\ s_3=(3,4)\}$. Let $Q$ be the word of size $8$, $Q=(s_1,\ s_2,\ s_1,\ s_3,\ s_1,\ s_2,\ s_3,\ s_1)$ and $\pi=(1,2,4)\in S_4$ with $\ell(\pi)=4$. The set of all the reduced expression of $\pi$ is $\{s_1s_2s_3s_2,\ s_1s_3s_2s_3,\ s_3s_1s_2s_3\}.$ Let us denote $Q=(\sigma_1,\ \sigma_2,\ \sigma_3,\ \sigma_4,\ \sigma_5,\ \sigma_6,\ \sigma_7,\ \sigma_8).$ The set of all the subwords of $Q$ that represent $\pi$ is $\{(\sigma_1,\sigma_2,\sigma_4,\sigma_6),\ (\sigma_1,\sigma_4,\sigma_6,\sigma_7), $ $(\sigma_3,\sigma_4,\sigma_6,\sigma_7),\ (\sigma_4,\sigma_5,\sigma_6,\sigma_7)\}.$ Hence, the subword complex $\Delta=\Delta(Q,\pi)$ is the simplicial complex with the facets  $\{\sigma_3,\sigma_5,\sigma_7,\sigma_8\},\ \{\sigma_2,\sigma_3,\sigma_5,\sigma_8\},\ \{\sigma_1,\sigma_2,\sigma_5,\sigma_8\},$ $\{\sigma_1,\sigma_2,\sigma_3,\sigma_8\}$.

Let $k[x_1,\ldots,x_8]$ be the polynomial ring over a field $k$. The Stanley--Reisner ideal of the Alexander dual of $\Delta$ is the square-free monomial ideal whose minimal monomial system of generators is $G(I_{\Delta^{\vee}})=\{x_1x_2x_4x_6,\ x_1x_4x_6x_7,\ x_3x_4x_6x_7,\ x_4x_5x_6x_7\}.$ We denote $\mathbf{x}_{P_1}=x_1x_2x_4x_6,\ \mathbf{x}_{P_2}=x_1x_4x_6x_7,\ \mathbf{x}_{P_3}=x_3x_4x_6x_7,\ \mathbf{x}_{P_4}=x_4x_5x_6x_7$. Then we have $\mathbf{x}_{P_1}>_{lex}\ldots>_{lex}\mathbf{x}_{P_4}$. Since $(\mathbf{x}_{P_1})\colon \mathbf{x}_{P_2}=(x_2),\ (\mathbf{x}_{P_1},\mathbf{x}_{P_2})\colon \mathbf{x}_{P_3}=(x_1)$ and $(\mathbf{x}_{P_1},\mathbf{x}_{P_2},\mathbf{x}_{P_3})\colon \mathbf{x}_{P_4}=(x_1,x_3)$, $I_{\Delta^{\vee}}$ has linear quotients with respect to this order of the monomials from $G(I_{\Delta^{\vee}})$. 
\end{Example}

We can state the following corollary:

\begin{Corollary}\label{shell} Let $\Delta$ be the subword complex $\Delta(Q,\pi)$ and let $G(I_{\Delta^{\vee}})=\{\mathbf{x}_{P_1},\ldots,$ $\mathbf{x}_{P_r}\}$, where $\mathbf{x}_{P_1}>_{lex}\ldots>_{lex}\mathbf{x}_{P_r}$. Then $Q\setminus P_{1},\ldots, Q\setminus P_r$ is a shelling order on the facets of $\Delta$.
\end{Corollary}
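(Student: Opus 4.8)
The plan is to translate the statement into the language of Alexander duality and then invoke the dictionary between shellings and linear quotients. By Theorem~\ref{linquot}, the Stanley--Reisner ideal $I_{\Delta^{\vee}}$ has linear quotients with respect to the order $\mathbf{x}_{P_1} >_{lex} \cdots >_{lex} \mathbf{x}_{P_r}$ of its minimal monomial generators. The key external fact I would use is the Herzog--Hibi--Zheng correspondence (\cite[Theorem 1.4]{HeHiZh}): a simplicial complex $\Delta$ on $[n]$ is shellable if and only if $I_{\Delta^{\vee}}$ has linear quotients, and moreover the orderings correspond to each other. Concretely, if $\mathbf{x}_{P_1}, \ldots, \mathbf{x}_{P_r}$ is an order of $G(I_{\Delta^{\vee}})$ giving linear quotients, then the corresponding order $F_1, \ldots, F_r$ of the facets of $\Delta$, where $F_i = [n] \setminus P_i = Q \setminus P_i$, is a shelling order.

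The steps would be: first, recall that the facets of $\Delta = \Delta(Q,\pi)$ are exactly the complements $Q \setminus P$ where $P \subseteq Q$ represents $\pi$, so that the generator $\mathbf{x}_{P_i}$ of $I_{\Delta^{\vee}}$ corresponds bijectively to the facet $Q \setminus P_i$. Second, invoke Theorem~\ref{linquot} to know that the lexicographic order on the $\mathbf{x}_{P_i}$ gives linear quotients. Third, apply \cite[Theorem 1.4]{HeHiZh} in the direction that converts a linear-quotients order on $G(I_{\Delta^{\vee}})$ into a shelling order on $\mathcal{F}(\Delta)$, obtaining that $Q \setminus P_1, \ldots, Q \setminus P_r$ is a shelling order.

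There is essentially no obstacle here: the corollary is a direct consequence of Theorem~\ref{linquot} combined with the cited equivalence, so the only thing to be careful about is that the \emph{order} is preserved under the correspondence (not merely the existence of some shelling), which is exactly what the Herzog--Hibi--Zheng result guarantees. One should also note that purity of $\Delta(Q,\pi)$ (all facets have size $n - \ell(\pi)$, equivalently all $P_i$ have size $\ell(\pi)$) is implicit and already used in the setup, so no extra hypothesis is needed. Thus the proof is a two-line deduction.

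\begin{proof}
By Theorem~\ref{linquot}, $I_{\Delta^{\vee}}$ has linear quotients with respect to the order $\mathbf{x}_{P_1} >_{lex} \cdots >_{lex} \mathbf{x}_{P_r}$ of the minimal monomial generators, where each $P_i \subseteq Q$ represents $\pi$ and $Q \setminus P_i$ is the corresponding facet of $\Delta$. By \cite[Theorem 1.4]{HeHiZh}, if the generators of $I_{\Delta^{\vee}}$ are ordered so that $I_{\Delta^{\vee}}$ has linear quotients, then the induced order on the facets of $\Delta$ is a shelling order. Hence $Q \setminus P_1, \ldots, Q \setminus P_r$ is a shelling order on the facets of $\Delta$.
\end{proof}
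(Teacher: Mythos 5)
Your proof is correct and follows essentially the same route as the paper: both deduce the shelling order from Theorem~\ref{linquot} together with the standard order-preserving dictionary between linear quotient orders on $G(I_{\Delta^{\vee}})$ and shelling orders on the facets of $\Delta$ (the paper cites Herzog's characterization \cite{H} and notes the translation $(Q\setminus P_i)\setminus(Q\setminus P_j)=P_j\setminus P_i$, while you cite \cite[Theorem 1.4]{HeHiZh}, but the content is the same). No gaps.
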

\begin{proof} Since $G(I_{\Delta^{\vee}})=\{\mathbf{x}_{P_1},\ldots,\mathbf{x}_{P_r}\}$, we have that $Q\setminus P_1,\ldots,Q\setminus P_r$ are the facets of $\Delta$. Since $(Q\setminus P_i)\setminus(Q\setminus P_j)=P_j\setminus P_i$ the assertion follows from the characterization of a square-free monomial ideal with linear quotients \cite{H}. 
\end{proof}

\begin{Remark}\rm The shelling from Corollary \ref{shell} for the subword complex $\Delta(Q,\pi)$ coincides with the shelling inductively constructed by vertex-decomposing the subword complex $\Delta(Q,\pi)$.
\end{Remark}

\begin{Example}\rm We study the same subword complex as in Example \ref{ex}. We have that $F_1=\{\sigma_3,\sigma_5,\sigma_7,\sigma_8\},\ F_2=\{\sigma_2,\sigma_3,\sigma_5,\sigma_8\},\ F_3=\{\sigma_1,\sigma_2,\sigma_5,\sigma_8\},\ F_4=\{\sigma_1,\sigma_2,\sigma_3,\sigma_8\}$, is a shelling on the facets of $\Delta$. We shall prove that the same shelling is obtained inductively by vertex-decomposing $\Delta$. 

Let $Q'=Q\setminus \sigma_1$. Since $\ell(\sigma_1\pi)<\ell(\pi)$, by the proof of \cite[Theorem 2.5]{KnMi}, one obtains that $$\lk(\sigma_1,\Delta)=\Delta(Q',\pi)=\langle\{\sigma_2,\sigma_5,\sigma_8\},\ \{\sigma_2,\sigma_3,\sigma_8\}\rangle$$ and $$\del(\sigma_1,\Delta)=\del(Q',\sigma_1\pi)=\langle\{\sigma_3,\sigma_5,\sigma_7,\sigma_8\},\ \{\sigma_2,\sigma_3,\sigma_5,\sigma_8\}\rangle.$$ We denote $\Delta_1=\langle\{\sigma_2,\sigma_5,\sigma_8\},\ \{\sigma_2,\sigma_3,\sigma_8\}\rangle$ and $\Delta_2=\langle\{\sigma_3,\sigma_5,\sigma_7,\sigma_8\},\ \{\sigma_2,\sigma_3,\sigma_5,\sigma_8\}\rangle$. We apply the same procedure to $\Delta_1$. Let $Q''=Q'\setminus\sigma_2$. Since $\ell(\sigma_2\pi)>\ell(\pi)$, we have $$\lk(\sigma_2,\Delta_1)=\del(\sigma_2,\Delta_1)=\Delta(Q'',\pi)=\langle\{\sigma_5,\sigma_8\},\ \{\sigma_3,\sigma_8\}\rangle.$$ 
Let us denote this simplicial complex by $\Delta_1'$ and $Q'''=Q''\setminus \sigma_2$. We have $\ell(\sigma_3\pi)<\ell(\pi)$. Hence, $$\lk(\sigma_3,\Delta_1')=\Delta(Q''',\pi)=\langle\{\sigma_8\}\rangle$$ and $$\del(\sigma_3,\Delta_1')=\Delta(Q''',\sigma_3\pi)=\Delta(Q''',s_2s_3s_2)=\langle\{\sigma_5,\sigma_8\}\rangle.$$

For the simplicial complex $\Delta_2$, since $\ell(\sigma_2\sigma_1\pi)<\ell(\sigma_1\pi)$, one has that $$\lk(\sigma_2,\Delta_2)=\Delta(Q'',\sigma_1\pi)=\Delta(Q'',s_2s_3s_2)=\langle\{\sigma_3,\sigma_5,\sigma_8\}\rangle$$ and $$\del(\sigma_2,\Delta_2)=\Delta(Q'',\sigma_2\sigma_1\pi)=\Delta(Q'',s_3s_2)=\langle\{\sigma_3,\sigma_5,\sigma_7,\sigma_8\}\rangle.$$

Hence, we get the following shelling on the facets of the subword complex $\Delta$
	\[\{\sigma_3,\sigma_5,\sigma_7,\sigma_8\},\ \{\sigma_2,\sigma_3,\sigma_5,\sigma_8\},\ \{\sigma_1,\sigma_2,\sigma_5,\sigma_8\},\ \{\sigma_1,\sigma_2,\sigma_3,\sigma_8\}
\]
which is the same shelling as the one obtained in the Example \ref{ex}. 
\end{Example}

Recall that a pure simplicial complex $\Delta$ with the vertex set $\{v_1,\ldots, v_n\}$ is \textit{shifted} if there exists a labelling of the vertices by $1$ to $n$ such that, for any face $F\in \Delta$, replacing any vertex $v_i\in F$
by a vertex with a smaller label that does not belong to $F$, we get a set which is also a face of $\Delta$.

Note that subword complexes are not shifted simplicial complexes, as can be seen in the following example.

\begin{Example}\rm Let $(S_4,S)$ be the Coxeter system, $Q$ the word $(s_1,s_3,s_3,s_1,s_2,s_3),$ and $\pi=(1,2,4)\in S_4$, $\ell(\pi)=4$. We denote $Q=(\sigma_1,\ \sigma_2,\ \sigma_3,\ \sigma_4,\ \sigma_5,\ \sigma_6).$ The set of all the subwords of $Q$ that represent $\pi$ is $\{(\sigma_1,\sigma_2,\sigma_5,\sigma_6),\ (\sigma_1,\sigma_3,\sigma_5,\sigma_6),\ (\sigma_2,\sigma_4,\sigma_5,$ $\sigma_6),\ (\sigma_3,\sigma_4,\sigma_5,\sigma_6)\}.$ Hence, the subword complex $\Delta=\Delta(Q,\pi)$ is the simplicial complex with the facets set $\mathcal{F}(\Delta)=\{\{\sigma_3,\sigma_4\},\ \{\sigma_2,\sigma_4\},\ \{\sigma_1,\sigma_3\},\ \{\sigma_1,\sigma_2\}\}$. If we consider a label of the vertices $\sigma_1,\ldots,\sigma_4$, such that $\sigma_1<\sigma_3$, looking at the facet $\{\sigma_3,\sigma_4\}$ and replacing $\sigma_3$ by $\sigma_1$, we obtain the set $\{\sigma_1,\sigma_4\}$ which is not a face in $\Delta$. If we order the vertices such that $\sigma_3<\sigma_1$, looking at the facet $\{\sigma_1,\sigma_2\}$ and replacing $\sigma_1$ by $\sigma_3$, we obtain the set $\{\sigma_2,\sigma_3\}$ which is not a face in $\Delta$. Hence, $\Delta$ is not a shifted simplicial complex.
\end{Example}

Let $I$ be a monomial ideal of $k[x_1,\ldots,x_n]$ with $G(I)=\{w_1,\ldots,w_r\}$. Assume that $I$ has linear quotients with respect to the sequence $w_1,\ldots,w_r$. We denote $\set(w_i)=\{k\in[n]\ :\ x_{k}\in(w_1,\ldots,w_{i-1})\colon w_i\},$ for all $2\leq i\leq r$.

Next we determine all the elements of $\set(\mathbf{x}_P)$, where $\mathbf{x}_P\in G(I_{\Delta^{\vee}})$ and the monomials from $G(I_{\Delta^{\vee}})$ are ordered decreasing in the lexicographical order. For this purpose, let us fix some notations. For a monomial $w$ in $k[x_1,\ldots,x_n]$, we will denote $\max(w)=\max\{i\in[n]\ :\  x_i|w\}$ and $\min(w)=\min\{i\in[n]\ :\ x_i|w\}.$
 
 \begin{Lemma}\label{snmid} Let $I$ be a square-free monomial ideal with $G(I)=\{w_1,\ldots,w_r\}$ and $w_1>_{lex}\ldots>_{lex}w_r$ such that $I$ has linear quotients with respect to this order of the generators. Then
	\[\set(w_i)\subseteq [\max(w_i)]\setminus \supp(w_i),
\]
where $[\max(w_i)]=\{1,2,\ldots,\max(w_i)\}$ and $\supp(w_i)=\{k\in[n]\ :\ x_k\mid w_i\}$.
 \end{Lemma}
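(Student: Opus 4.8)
The plan is to unpack the definition of $\set(w_i)$ directly and show each element $k$ lying in it must satisfy both $k \le \max(w_i)$ and $x_k \nmid w_i$. By definition, $k \in \set(w_i)$ means $x_k \in (w_1,\dots,w_{i-1}) \colon w_i$, i.e. $x_k w_i \in (w_1,\dots,w_{i-1})$, so there is some $j < i$ with $w_j \mid x_k w_i$. Since $I$ has linear quotients with respect to the given order, we may in fact choose $j < i$ so that $w_j/[w_i,w_j] = x_k$, i.e. $w_j = x_k \cdot [w_i,w_j]$ is obtained from a divisor of $w_i$ by multiplying by $x_k$.

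The containment $x_k \nmid w_i$ should be immediate: all generators here are squarefree (as $I = I_{\Delta^\vee}$ is a squarefree monomial ideal, each $w_i = \mathbf{x}_{P_i}$), so if $x_k \mid w_i$ then $x_k w_i$ is not squarefree, yet $x_k w_i$ would have to be divisible by the squarefree monomial $w_j$, and then $w_j \mid w_i$ (since the extra factor $x_k$ already divides $w_i$), contradicting minimality of the generating set $G(I)$. Equivalently, $k \in \set(w_i)$ forces $k \notin \supp(w_i)$.

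For the bound $k \le \max(w_i)$, I would argue by contradiction: suppose $k > \max(w_i)$. Write $w_j = x_k \cdot v$ where $v = [w_i, w_j]$ divides $w_i$, so every variable dividing $v$ has index $\le \max(w_i) < k$. Then $w_j$ is squarefree, its largest-index variable is $x_k$, and $w_j/x_k = v \mid w_i$. Comparing $w_j$ and $w_i$ in the lexicographic order: $w_i$ is divisible by $v = w_j/x_k$ and by at least one more variable (since $|w_i| = \ell(\pi) = |w_j|$ and $\deg v \le \ell(\pi)-1$, in fact $\deg v = \ell(\pi)-1$ here); all the "extra" variables of $w_i$ not in $v$ have index strictly less than $k = \max(w_j)$. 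Hence when reading exponent vectors from $x_1$ upward, $w_i$ and $w_j$ agree on all variables of index $< k$ except that $w_i$ acquires its extra variable at some position $< k$ while $w_j$ does not; this makes $w_i >_{lex} w_j$, forcing $i < j$, contradicting $j < i$. Therefore $k \le \max(w_i)$.

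The main obstacle is making the lexicographic comparison airtight, since one must be careful about the convention for $>_{lex}$ on monomials and about the fact that $w_j$ and $w_i$ need not have the same support below index $k$ in general. The cleanest route is to exploit that all generators have the \emph{same} degree $\ell(\pi)$ (established in the proof of Theorem~\ref{linquot}): then $w_j = x_k v$ and $w_i = x_m v$ for a single extra variable $x_m$ with $m = \min(\supp(w_i) \setminus \supp(v))$, and $m < k$ by assumption $k > \max(w_i)$; this reduces the comparison to the two monomials differing in exactly one slot, where $x_m v >_{lex} x_k v$ because $m < k$. If one prefers to avoid using equidegree, the same contradiction can be extracted from the linear-quotients bookkeeping, but the equidegree argument is shortest and entirely within the scope of the results already proved.
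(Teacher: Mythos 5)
Your proof is correct and follows essentially the same route as the paper's: both extract from $k\in\set(w_i)$ a generator $w_j=x_k\,w_i/x_t$ with $j<i$ (equivalently $w_j=x_k[w_i,w_j]$), rule out $k\in\supp(w_i)$ using squarefreeness, and obtain $k\le\max(w_i)$ from the lexicographic comparison of $w_j$ with $w_i$. The paper's own proof also tacitly uses that $w_i/[w_i,w_j]$ is a single variable (i.e.\ that the generators involved have equal degree), so your explicit appeal to equigeneratedness is no further from the stated generality than the original.
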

 \begin{proof} Let $i\geq2$ and $k\in\set(w_i)$. Then $x_kw_i\in(w_1,\ldots,w_{i-1})$, that is there exist a variable $x_t$, $t\neq k$ and $j<i$ such that $w_ix_k=w_jx_t$. If $k\in\supp(w_i)$, since $k\neq t$, we get that $x_k^2\mid w_j$, contradiction. Thus $k\notin\supp(w_i)$. Since $t\neq k$, we have $x_t\mid w_i$. Then $w_j=x_kw_i/x_t>_{lex} w_i$,
which implies $k<t\leq\max(w_i)$.
 \end{proof}
 In general, the inclusion is strict, as one can see in the following example.
\begin{Example}\rm We consider the same subword complex as in Example \ref{ex}. We note that $\set(\mathbf{x}_{P_2})=\{2\}$, $\set(\mathbf{x}_{P_3})=\{1\}$ and $\set(\mathbf{x}_{P_4})=\{1,3\}$. We have $[\max(\mathbf{x}_{P_4})]\setminus\supp\{\mathbf{x}_{P_4}\}=\{1,2,3\},$ and $\set(\mathbf{x}_{P_4})\subsetneq\{1,2,3\}$.
\end{Example}

We will denote by $P_j\setminus P_i$ the word obtained from $P_j$ by omitting the simple reflections that appear both in $P_i$ and $P_j$. We will use $\min(P_j\setminus P_i)$ for $\min(\mathbf{x}_{P_j}/[\mathbf{x}_{P_j},\mathbf{x}_{P_i}])$.

\begin{Proposition}\label{set} Let $\Delta$ be the subword complex $\Delta(Q,\pi)$ and $G(I_{\Delta^{\vee}})=\{\mathbf{x}_{P_1},\ldots,$ $\mathbf{x}_{P_r}\}$, where $\mathbf{x}_{P_1}>_{lex}\ldots>_{lex}\mathbf{x}_{P_r}$. For any $1\leq i\leq r$, we have $$\set(\mathbf{x}_{P_i})=\{\min(P_j\setminus P_i)\ \colon\ 1\leq j<i\}.$$
\end{Proposition}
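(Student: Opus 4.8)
The plan is to prove the two inclusions separately; both turn out to be essentially immediate consequences of Theorem~\ref{linquot} and Lemma~\ref{snmid}, with only a small amount of index bookkeeping.

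For the inclusion $\{\min(P_j\setminus P_i):1\le j<i\}\subseteq\set(\mathbf{x}_{P_i})$, fix $j<i$ and write $P_i=(\sigma_{i_1},\ldots,\sigma_{i_{\ell(\pi)}})$, $P_j=(\sigma_{j_1},\ldots,\sigma_{j_{\ell(\pi)}})$ with increasing index sequences. Let $l$ be the first position at which the two sequences differ; since $\mathbf{x}_{P_j}>_{lex}\mathbf{x}_{P_i}$ we have $j_l<i_l$. First I would check that $\min(P_j\setminus P_i)=j_l$: the indices $j_1,\ldots,j_{l-1}$ equal $i_1,\ldots,i_{l-1}$ and so lie in $\supp(\mathbf{x}_{P_i})$, whereas $j_l\notin\supp(\mathbf{x}_{P_i})$ (if $j_l=i_m$ then $m<l$ would force $j_m=j_l$, impossible, and $m\ge l$ would force $i_m\ge i_l>j_l$). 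Then I invoke the construction carried out in the proof of Theorem~\ref{linquot}: it produces a subword $P'$ of $Q$ representing $\pi$ with $\mathbf{x}_{P'}=x_{j_l}\mathbf{x}_{P_i}/x_{i_t}$ for some $i_t\in\supp(\mathbf{x}_{P_i})$, $i_t\ne j_l$, and $\mathbf{x}_{P'}>_{lex}\mathbf{x}_{P_i}$; hence $\mathbf{x}_{P'}\in\{\mathbf{x}_{P_1},\ldots,\mathbf{x}_{P_{i-1}}\}$. Since $\mathbf{x}_{P'}$ divides $x_{j_l}\mathbf{x}_{P_i}$, it follows that $x_{j_l}\in(\mathbf{x}_{P_1},\ldots,\mathbf{x}_{P_{i-1}})\colon\mathbf{x}_{P_i}$, i.e. $j_l=\min(P_j\setminus P_i)\in\set(\mathbf{x}_{P_i})$.

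For the reverse inclusion, let $k\in\set(\mathbf{x}_{P_i})$. Then $x_k\mathbf{x}_{P_i}\in(\mathbf{x}_{P_1},\ldots,\mathbf{x}_{P_{i-1}})$, so $\mathbf{x}_{P_j}$ divides $x_k\mathbf{x}_{P_i}$ for some $j<i$. By Lemma~\ref{snmid}, $k\notin\supp(\mathbf{x}_{P_i})$, so $x_k\mathbf{x}_{P_i}$ is squarefree of degree $\ell(\pi)+1$ with support $\{k\}\cup\supp(\mathbf{x}_{P_i})$, while $\mathbf{x}_{P_j}$ is squarefree of degree $\ell(\pi)$. Hence $\supp(\mathbf{x}_{P_j})$ is obtained from $\{k\}\cup\supp(\mathbf{x}_{P_i})$ by deleting exactly one index. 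That index cannot be $k$, since then $\mathbf{x}_{P_j}=\mathbf{x}_{P_i}$, contradicting that the minimal generators are pairwise distinct; so $\supp(\mathbf{x}_{P_j})=\{k\}\cup(\supp(\mathbf{x}_{P_i})\setminus\{i_t\})$ for some $t$, which gives $\supp(\mathbf{x}_{P_j})\setminus\supp(\mathbf{x}_{P_i})=\{k\}$, that is $\min(P_j\setminus P_i)=k$. Thus $k$ belongs to the right-hand side.

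No genuine obstacle is expected: the real content is already packaged in the proof of Theorem~\ref{linquot} and in Lemma~\ref{snmid}. The one place calling for care is the identification $\min(P_j\setminus P_i)=j_l$ in the first inclusion, together with the observation that the auxiliary subword $P'$ supplied by the proof of Theorem~\ref{linquot} is forced to be one of $P_1,\ldots,P_{i-1}$ because it represents $\pi$ and its monomial exceeds $\mathbf{x}_{P_i}$ lexicographically; the remainder is routine bookkeeping with supports of squarefree monomials of equal degree.
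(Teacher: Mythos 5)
Your proof is correct and follows essentially the same route as the paper: the inclusion $\set(\mathbf{x}_{P_i})\subseteq\{\min(P_j\setminus P_i)\}$ via Lemma~\ref{snmid} and a squarefree degree count, and the reverse inclusion by citing the subword $P'$ constructed in the proof of Theorem~\ref{linquot}. You merely spell out details the paper leaves implicit (the identification $\min(P_j\setminus P_i)=j_l$ and why $P'$ must be one of $P_1,\ldots,P_{i-1}$), which is a welcome clarification but not a different argument.
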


\begin{proof} We will use $I$ instead of $I_{\Delta^{\vee}} $ to simplify the notation. 

Let $s\in\set(\mathbf{x}_{P_i})$. Since $s\in\set(\mathbf{x}_{P_i})$, $x_s\mathbf{x}_{P_i}\in (\mathbf{x}_{P_1},\ldots,\mathbf{x}_{P_{i-1}})$. Hence, there exists $j<i$ and a variable $x_t$ such that 
	$x_s\mathbf{x}_{P_i}=x_t\mathbf{x}_{P_j}.$
One may note that $s\neq t$ since $\mathbf{x}_{P_i}\neq\mathbf{x}_{P_j}$. By Lemma \ref{snmid}, we have that $x_s\nmid\mathbf{x}_{P_i}$. Thus, $\mathbf{x}_{P_j}=x_s\mathbf{x}_{P_i}/x_t$ and the assertion follows.

The other inclusion follows from the proof of Theorem \ref{linquot}. 
\end{proof}

We note that, for an arbitrary square-free monomial ideal which has linear quotients with respect to the lexicographical order of its minimal monomial generators $w_1,\ldots,w_r$, the equality $\set(w_i)=\{\min(\supp(w_j)\setminus\supp(w_i))\ :\ 1\leq j<i\}$ might be not true.

\begin{Example}\rm Let $I=(x_1x_2x_3,\ x_2x_3x_4,\ x_2x_4x_5)$ be a square-free monomial ideal in the polynomial ring $k[x_1,\ldots,x_5]$. Denote $w_1=x_1x_2x_3,\ w_2= x_2x_3x_4,\ w_3=x_2x_4x_5$. One may note that $w_1>_{lex}w_2>_{lex}w_3$ and $I$ has linear quotients with respect to this order of the generators. We have that $\set(w_2)=\{1\}$ and $\set(w_3)=\{3\}$. If we denote $F_i=\supp(w_i)$, $1\leq i\leq3$, we have that $\min(F_1\setminus F_3)=\{1\}$ and $\{1\}\notin\set(w_3)$.
\end{Example}
 
Let $\Delta$ be the subword complex $\Delta(Q,\pi)$ and $G(I_{\Delta^{\vee}})=\{\mathbf{x}_{P_1},\ldots,\mathbf{x}_{P_r}\}$ be the minimal monomial system of generators for $I_{\Delta^{\vee}}$ with $\mathbf{x}_{P_1}>_{lex}\ldots>_{lex}\mathbf{x}_{P_r}$. For a monomial $\mathbf{x}_{P_i}$ from $G(I_{\Delta^{\vee}})$, we denote $d_i=|\set(\mathbf{x}_{P_i})|$. We note that, by Proposition \ref{set}, we have $d_i\leq i-1$. One may easily find examples with $d_i=i-1$ for all $i$.

\begin{Theorem}\label{pd} Let $Q=(\sigma_1,\ldots,\sigma_n)$ be a word in $W$, $\pi$ an element in $W$ and $\Delta$ the subword complex $\Delta(Q,\pi)$. Then $\projdim(I_{\Delta^{\vee}})\leq n-\ell(\pi).$
\end{Theorem}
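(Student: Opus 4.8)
The plan is to bound the projective dimension of $I_{\Delta^{\vee}}$ using the fact, established in Theorem~\ref{linquot}, that $I_{\Delta^{\vee}}$ has linear quotients with respect to the lexicographic order of its minimal generators $\mathbf{x}_{P_1} >_{lex} \cdots >_{lex} \mathbf{x}_{P_r}$. For an ideal with linear quotients it is a standard fact (see Herzog--Takayama, or \cite{HeHiZh}) that
\[
\projdim(I_{\Delta^{\vee}}) = \max\{d_i \colon 1\leq i\leq r\},
\]
where $d_i = |\set(\mathbf{x}_{P_i})|$; the minimal free resolution is the iterated mapping cone, which in this situation is minimal. So it suffices to show that $d_i \leq n - \ell(\pi)$ for every $i$.

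Next I would invoke Proposition~\ref{set}, which identifies $\set(\mathbf{x}_{P_i}) = \{\min(P_j\setminus P_i) \colon 1\leq j<i\}$. Each element of this set is the index of a position in $Q$ that lies in $P_j$ but not in $P_i$, hence in particular it is an index in $[n]\setminus \supp(\mathbf{x}_{P_i})$. Since $P_i$ represents $\pi$, it has size $\ell(\pi)$, so $|\supp(\mathbf{x}_{P_i})| = \ell(\pi)$ and therefore $|[n]\setminus\supp(\mathbf{x}_{P_i})| = n - \ell(\pi)$. Thus $\set(\mathbf{x}_{P_i}) \subseteq [n]\setminus\supp(\mathbf{x}_{P_i})$ forces $d_i \leq n-\ell(\pi)$, and taking the maximum over $i$ gives the claim.

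The only slightly delicate point is making sure the equality $\projdim(I_{\Delta^{\vee}}) = \max_i d_i$ is legitimate here, i.e. that the mapping cone resolution built from the linear quotients is actually minimal. This holds because $I_{\Delta^{\vee}}$ is generated in a single degree $\ell(\pi)$ (all the $P_i$ have size $\ell(\pi)$): when the generators are of equal degree and the quotients are linear, the comparison maps in the iterated mapping cone have entries in the maximal ideal, so no cancellation occurs and the cone is minimal. I do not expect any real obstacle beyond citing this minimality correctly; the combinatorial heart of the bound is simply the observation $\set(\mathbf{x}_{P_i}) \subseteq [n]\setminus\supp(\mathbf{x}_{P_i})$ combined with $|\supp(\mathbf{x}_{P_i})| = \ell(\pi)$.

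\begin{proof}
Since $I_{\Delta^{\vee}}$ has linear quotients with respect to $\mathbf{x}_{P_1} >_{lex}\cdots >_{lex} \mathbf{x}_{P_r}$ (Theorem~\ref{linquot}) and all these generators have the same degree $\ell(\pi)$, the iterated mapping cone associated to this order gives a minimal free resolution of $I_{\Delta^{\vee}}$, and hence
\[
\projdim(I_{\Delta^{\vee}}) = \max\{d_i \colon 1\leq i\leq r\},
\]
where $d_i = |\set(\mathbf{x}_{P_i})|$. By Proposition~\ref{set}, every element of $\set(\mathbf{x}_{P_i})$ has the form $\min(P_j\setminus P_i)$ for some $j<i$, which is an index belonging to $\supp(\mathbf{x}_{P_j})$ but not to $\supp(\mathbf{x}_{P_i})$; in particular $\set(\mathbf{x}_{P_i})\subseteq [n]\setminus\supp(\mathbf{x}_{P_i})$. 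As $P_i$ represents $\pi$, it is a subword of $Q$ of size $\ell(\pi)$, so $|\supp(\mathbf{x}_{P_i})| = \ell(\pi)$ and therefore
\[
d_i = |\set(\mathbf{x}_{P_i})| \leq |[n]\setminus\supp(\mathbf{x}_{P_i})| = n-\ell(\pi).
\]
Taking the maximum over $1\leq i\leq r$ yields $\projdim(I_{\Delta^{\vee}})\leq n-\ell(\pi)$.
\end{proof}
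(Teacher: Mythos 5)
Your proof is correct and follows essentially the same route as the paper: both rest on the identity $\projdim(I_{\Delta^{\vee}})=\max_i d_i$ for an ideal with linear quotients, combined with the observation that $\set(\mathbf{x}_{P_i})$ is disjoint from $\supp(\mathbf{x}_{P_i})$, which has cardinality $\ell(\pi)$. The paper phrases this as a contradiction argument via Lemma~\ref{snmid} rather than directly via Proposition~\ref{set}, but the content is identical, and your extra remark on the minimality of the mapping cone resolution is a welcome clarification.
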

\begin{proof} Let $G(I_{\Delta^{\vee}})=\{\mathbf{x}_{P_1},\ldots,\mathbf{x}_{P_r}\}$, where $\mathbf{x}_{P_1}>_{lex}\ldots>_{lex}\mathbf{x}_{P_r}$. Since $I_{\Delta^{\vee}}$ has linear quotients with respect to the sequence $\mathbf{x}_{P_1},\ldots,\mathbf{x}_{P_r}$, we have $\projdim(I_{\Delta^{\vee}})=\max\{d_1,\ldots,d_r\}$, \cite{H}. Let us assume by contradiction that $\projdim(I_{\Delta^{\vee}})$ $> n-\ell(\pi)$. Hence, there exists $1\leq k\leq r$ such that $\projdim(I_{\Delta^{\vee}})=d_k> n-\ell(\pi).$ By Lemma \ref{snmid}, we have $\set(\mathbf{x}_{P_k})\cap\supp(\mathbf{x}_{P_k})=\emptyset.$ Since $\mathbf{x}_{P_k}$ is a square-free monomial, $|\supp(\mathbf{x}_{P_k})|=\ell(\pi)$. We have that 
	\[|\set(\mathbf{x}_{P_k})|+|\supp(\mathbf{x}_{P_k})|>n-\ell(\pi)+\ell(\pi)=n
\]
that is $|\set(\mathbf{x}_{P_k})\cup\supp(\mathbf{x}_{P_k})|>n$, which is a contradiction.
\end{proof}

\begin{Remark}\rm Let $Q=(\sigma_1,\ldots,\sigma_n)$ be a word in $W$, $\pi$ an element in $W$ and $\Delta$ the subword complex $\Delta(Q,\pi)$. Let $G(I_{\Delta^{\vee}})=\{\mathbf{x}_{P_1},\ldots,\mathbf{x}_{P_r}\}$ be the minimal monomial system of generators for $I_{\Delta^{\vee}}$ with $\mathbf{x}_{P_1}>_{lex}\ldots>_{lex}\mathbf{x}_{P_r}$. By Theorem \ref{pd}, we have that if there exists $1\leq i\leq r$ such that $d_i=i-1$ then $i\leq n-\ell(\pi)+1$.
\end{Remark}

\begin{Corollary}Let $Q=(\sigma_1,\ldots,\sigma_n)$ be a word in $W$, $\pi\in W$ be an element and $\Delta$ the subword complex $\Delta(Q,\pi)$. Then $\reg(I_{\Delta})\leq n-\ell(\pi)+1.$
\end{Corollary}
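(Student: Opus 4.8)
The plan is to deduce this bound from Theorem~\ref{pd} by means of Terai's Alexander duality formula. Recall that for any simplicial complex $\Gamma$ on the vertex set $[n]$ one has $\reg(I_{\Gamma})=\projdim\bigl(k[\Gamma^{\vee}]\bigr)$, where $k[\Gamma^{\vee}]$ denotes the Stanley--Reisner ring of the Alexander dual. Since the operation $\Gamma\mapsto\Gamma^{\vee}$ is involutive, i.e.\ $(\Delta^{\vee})^{\vee}=\Delta$, applying this identity to $\Gamma=\Delta$ gives $\reg(I_{\Delta})=\projdim\bigl(k[\Delta^{\vee}]\bigr)$. So the first step is simply to record this identity.

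Next I would pass from the projective dimension of the ring $k[\Delta^{\vee}]=R/I_{\Delta^{\vee}}$ to that of the ideal $I_{\Delta^{\vee}}$ itself. From the short exact sequence $0\to I_{\Delta^{\vee}}\to R\to R/I_{\Delta^{\vee}}\to 0$, together with the fact that $I_{\Delta^{\vee}}\neq(0)$ (which holds because $\Delta$ has at least one facet, so $G(I_{\Delta^{\vee}})\neq\emptyset$), one obtains $\projdim\bigl(R/I_{\Delta^{\vee}}\bigr)=\projdim(I_{\Delta^{\vee}})+1$. Combining this with the previous step yields $\reg(I_{\Delta})=\projdim(I_{\Delta^{\vee}})+1$.

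Finally, Theorem~\ref{pd} asserts $\projdim(I_{\Delta^{\vee}})\leq n-\ell(\pi)$, and substituting this into the equality above gives $\reg(I_{\Delta})\leq n-\ell(\pi)+1$, as desired. The argument is essentially bookkeeping once Terai's formula is in hand; the only points requiring care are that the duality identity must be applied to $\Delta$ and not to $\Delta^{\vee}$ (this is precisely where involutivity of $(-)^{\vee}$ enters) and the off-by-one shift between $\projdim(I_{\Delta^{\vee}})$ and $\projdim(R/I_{\Delta^{\vee}})$. I do not expect any genuine obstacle beyond citing the correct statement of Terai's theorem.
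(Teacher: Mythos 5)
Your proof is correct and follows exactly the paper's argument: Terai's theorem gives $\reg(I_{\Delta})=\projdim k[\Delta^{\vee}]$, the short exact sequence gives $\projdim k[\Delta^{\vee}]=\projdim(I_{\Delta^{\vee}})+1$, and Theorem~\ref{pd} finishes. Your extra remarks on the involutivity of Alexander duality and the nonvanishing of $I_{\Delta^{\vee}}$ are just careful bookkeeping that the paper leaves implicit.
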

\begin{proof} By Terai's theorem, \cite{T}, we have that $\reg(I_{\Delta})=\projdim k[\Delta^{\vee}].$ On the other hand, by Theorem \ref{pd}, $\projdim k[\Delta^{\vee}]=\projdim(I_{\Delta^{\vee}})+1\leq n-\ell(\pi)+1$
which ends our proof.
\end{proof}

The following results will be used in the last section of this paper.

\begin{Lemma}\label{min} Let $u,v,w$ be monomials of the same degree in $k[x_1,\ldots,x_n]$. Assume that $u,v>_{lex}w$ and $\min(u/\gcd(u,w))\neq \min(v/\gcd(v,w))$. Then $\min(u/\gcd(u,w))$ $<\min(v/\gcd(v,w))$ if and only if $u>_{lex} v$.
\end{Lemma}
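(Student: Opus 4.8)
The statement compares two monomials $u,v$ of the same degree, both lexicographically larger than a fixed monomial $w$, under the hypothesis that the two "first points of departure from $w$" — namely $\min(u/\gcd(u,w))$ and $\min(v/\gcd(v,w))$ — are distinct. The plan is to reduce everything to a direct inspection of exponent vectors, and I would first set up notation: write $a_k,b_k,c_k$ for the exponent of $x_k$ in $u,v,w$ respectively, so that $\gcd(u,w)$ has exponent $\min(a_k,c_k)$ in $x_k$ and hence $u/\gcd(u,w)$ has exponent $\max(a_k-c_k,0)$ in $x_k$. Thus $\min(u/\gcd(u,w))$ is the least index $k$ with $a_k>c_k$, and similarly $\min(v/\gcd(v,w))$ is the least index with $b_k>c_k$. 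Call these indices $p$ and $q$; the hypothesis is $p\neq q$, and I want to show $p<q \iff u>_{lex} v$.

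The key observation to extract first is a characterization of $p$ purely in terms of $u$ versus $w$ in the lex order: since $u>_{lex} w$ and they have the same degree, the first index at which their exponent vectors differ must be an index where $u$ has the strictly larger exponent (otherwise $u<_{lex}w$), and at all earlier indices the exponents agree; therefore that first index of difference is exactly $p$, and moreover $a_k=c_k$ for all $k<p$ while $a_p>c_p$. The same statement holds for $v$ and $w$ with index $q$: $b_k=c_k$ for $k<q$ and $b_q>c_q$. This is the heart of the argument and the one step that needs a little care — the point being that "$\min(u/\gcd(u,w))$" coincides with "first place where $u$ beats $w$ in lex," which is true precisely because $u>_{lex}w$ forces the first disagreement to go in $u$'s favour.

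With that in hand the conclusion is a short case analysis. Assume WLOG $p<q$ (the hypothesis $p\neq q$ makes the two cases symmetric, so it suffices to prove one implication in this configuration and invoke symmetry, or just prove both directions). For $k<p$ we have $a_k=c_k$ and, since $p<q$, also $b_k=c_k$, so $a_k=b_k$; hence $u$ and $v$ first differ at index $\geq p$. At index $p$ itself: $a_p>c_p$, while $b_p=c_p$ because $p<q$ means $p$ is still in the range where $v$ agrees with $w$; therefore $a_p>c_p=b_p$, so $a_p>b_p$, and since all earlier exponents agree this gives $u>_{lex}v$. Conversely, if $u>_{lex}v$ then $p\neq q$ forces $p<q$: were $q<p$, the symmetric argument with the roles of $u$ and $v$ swapped would give $v>_{lex}u$, a contradiction. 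This closes both directions. I do not anticipate a genuine obstacle here; the only thing to be careful about is the equal-degree hypothesis, which is what guarantees that lex-larger means the first disagreement is a strict increase rather than the vectors being comparable by a prefix.
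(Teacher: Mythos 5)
Your proposal is correct and follows essentially the same route as the paper: both arguments identify $\min(u/\gcd(u,w))$ and $\min(v/\gcd(v,w))$ with the first indices at which the exponent vectors of $u$ and $v$ depart (upward) from that of $w$, and then compare exponents at the smaller of the two indices. The only cosmetic difference is that you prove the implication ``$p<q\Rightarrow u>_{lex}v$'' directly and deduce the converse by symmetry, whereas the paper declares that direction obvious and argues the converse by contradiction.
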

\begin{proof} In the following, for a monomial $m=x_1^{\alpha_1}\ldots x_n^{\alpha_n}$, we denote by $\nu_i(m)$ the exponent of the variable $x_i$ in $m$, that is $\nu_i(m)=\alpha_i$, $i=1,\ldots,n$. Since $u>_{lex}w$ there exists an integer $l'$ such that for all $i<l'$, $\nu_i(u)=\nu_i(w)$ and $\nu_{l'}(u)>\nu_{l'}(w)$. Similar, since $v>_{lex}w$ there exists an integer $l''$ such that for all $i<l''$, $\nu_i(v)=\nu_i(w)$ and $\nu_{l''}(v)>\nu_{l''}(w)$. By the hypothesis, we have $l'\neq l''$.

"$\Rightarrow$" The statement is obvious.

"$\Leftarrow$" Since $u>_{lex}v$, there exists an integer $l\in [n]$ such that for all $i<l$, $\nu_i(u)=\nu_i(v)$ and $\nu_{l}(u)>\nu_{l}(v)$.  One may easily check that the case $l\neq\min(l',l'')$ is impossible. Hence we must have $l=\min(l',l'')$. Let us assume that $l''<l'$. Hence $l=l''$ and we get that $\nu_{l''}(u)=\nu_{l''}(w)<\nu_{l''}(v)<\nu_{l''}(u)$ which is impossible. Thus, we must have $l'<l''$, that is $\min(u/\gcd(u,w))<\min(v/\gcd(v,w))$.
\end{proof}

\begin{Lemma}\label{i-1} Let $\Delta$ be the subword complex $\Delta(Q,\pi)$ and let $G(I_{\Delta^{\vee}})=\{\mathbf{x}_{P_1},\ldots,$ $\mathbf{x}_{P_r}\}$ be the minimal monomial system of generators for $I_{\Delta^{\vee}}$, with $\mathbf{x}_{P_1}>_{lex}\ldots>_{lex}$ $\mathbf{x}_{P_r}$. Assume that there exists $2\leq i\leq r$ such that $d_i=i-1$. Then, for all $1\leq j<i$, $d_j=j-1$. 
\end{Lemma}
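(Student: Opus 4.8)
The plan is to rephrase the claim in terms of the integers $\min(P_k\setminus P_m)$ and then establish a ``triangle identity'' for them. First I would use Proposition~\ref{set}: since $\set(\mathbf{x}_{P_m})=\{\min(P_k\setminus P_m):1\le k<m\}$, one has $d_m=m-1$ if and only if the integers $\min(P_1\setminus P_m),\ldots,\min(P_{m-1}\setminus P_m)$ are pairwise distinct. So it is enough to prove that if these are pairwise distinct for $m=i$, then they are pairwise distinct for every $j<i$ in place of $m$. For brevity write $a_{k,m}:=\min(P_k\setminus P_m)$ for $k<m$ (this is well defined, as $\mathbf{x}_{P_k}>_{lex}\mathbf{x}_{P_m}$ forces $P_k\neq P_m$ while $|P_k|=|P_m|=\ell(\pi)$).

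The heart of the matter is the identity $a_{k,i}=\min\{a_{k,j},\,a_{j,i}\}$ valid for all $1\le k<j<i$. Because $P_k,P_j,P_i$ all represent $\pi$, the monomials $\mathbf{x}_{P_k},\mathbf{x}_{P_j},\mathbf{x}_{P_i}$ are squarefree of the same degree $\ell(\pi)$; then, just as in the proof of Lemma~\ref{min}, for equal-size sets $A,B$ with $\mathbf{x}_A>_{lex}\mathbf{x}_B$ the number $\min(\mathbf{x}_A/\gcd(\mathbf{x}_A,\mathbf{x}_B))$ is exactly the first index at which the characteristic vectors of $A$ and $B$ disagree, and such an index necessarily lies in $A$ and not in $B$. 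To prove the identity I would set $p=a_{k,j}$, $q=a_{j,i}$, note that $p\neq q$ (at a common value $P_j$ would have to both omit it and contain it), and then split into the cases $p<q$ and $q<p$; in each case one uses that $P_k$, $P_j$, $P_i$ coincide on an appropriate initial segment to pin down the first index at which $P_k$ and $P_i$ disagree, finding it to be $\min(p,q)$ and to lie in $P_k$. This yields $a_{k,i}=\min(p,q)$.

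Given the identity, the lemma follows at once. Suppose $d_i=i-1$ and, for contradiction, that $d_j<j-1$ for some $j<i$. Then by the reformulation there exist $1\le k<k'<j$ with $a_{k,j}=a_{k',j}$, and the identity gives
\[
a_{k,i}=\min\{a_{k,j},\,a_{j,i}\}=\min\{a_{k',j},\,a_{j,i}\}=a_{k',i},
\]
which contradicts the pairwise distinctness of $a_{1,i},\ldots,a_{i-1,i}$ furnished by $d_i=i-1$ (both $k$ and $k'$ belong to $\{1,\ldots,i-1\}$). Hence $d_j=j-1$ for every $1\le j<i$, the case $j=1$ being vacuous. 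The only real work is the triangle identity, which is a careful but elementary lexicographic computation; the rest is bookkeeping, and I expect that identity to be the main obstacle.
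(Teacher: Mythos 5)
Your proof is correct and takes essentially the same route as the paper's: both arguments reduce the claim to comparing the first positions at which $P_k$, $P_j$, $P_i$ pairwise disagree and run the same initial-segment computation on reduced words of equal length. Your unconditional triangle identity $a_{k,i}=\min\{a_{k,j},a_{j,i}\}$ is a slightly cleaner packaging of the paper's key step, which establishes $\min(P_k\setminus P_i)=\min(P_k\setminus P_j)$ only after first invoking Lemma~\ref{min} together with the hypothesis $d_i=i-1$ to order the minima.
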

\begin{proof} Since $d_i=i-1$, we have that $\min(P_j\setminus P_i)\neq\min(P_k\setminus P_i)$, for all $1\leq j,k<i$, $j\neq k$. Hence, by Lemma \ref{min}, $\min(P_1\setminus P_i)<\ldots<\min(P_{i-1}\setminus P_i)$. Let us fix $j<i$ and assume that $P_j=(\sigma_{j_1},\ldots,\sigma_{j_{\ell(\pi)}})$ and $P_i=(\sigma_{i_1},\ldots,\sigma_{i_{\ell(\pi)}})$. We have that, for all $t<\min(P_j\setminus P_i),\ i_t=j_t$ and $j_{\min(P_j\setminus P_i)}<i_{\min(P_j\setminus P_i)}.$

Let $1\leq k<j$. We prove that $\min(P_k\setminus P_i)=\min(P_k\setminus P_j)$. This will imply that, for all $1\leq k,s<j$ with $k\neq s$, $\min(P_k\setminus P_j)\neq\min(P_s\setminus P_j)$ and hence $d_j=j-1$.

Since $k<j$ and $\min(P_j\setminus P_i)\neq\min(P_k\setminus P_i)$, by Lemma \ref{min}, $\min(P_k\setminus P_i)<\min(P_j\setminus P_i).$ On the other hand, since $k<i$, we have that for all $t<\min(P_k\setminus P_i)$ $i_t=k_t$ and $k_{\min(P_k\setminus P_i)}<i_{\min(P_k\setminus P_i)}=j_{\min(P_k\setminus P_i)}$. 

We proved that for all $t<\min(P_k\setminus P_i)$ $k_t=j_t$ and $k_{\min(P_k\setminus P_i)}<j_{\min(P_k\setminus P_i)}$ which means that $\min(P_k\setminus P_i)=\min(P_k\setminus P_j)$.
\end{proof}

\begin{Lemma}\label{unic l} Let $\Delta$ be the subword complex $\Delta(Q,\pi)$ and let $G(I_{\Delta^{\vee}})=\{\mathbf{x}_{P_1},\ldots,$ $\mathbf{x}_{P_r}\}$ be the minimal monomial system of generators for $I_{\Delta^{\vee}}$, with $\mathbf{x}_{P_1}>_{lex}\ldots>_{lex}\mathbf{x}_{P_r}$. Let $2\leq i\leq r$. Then $d_i=i-1$ if and only if there exists a unique $l\in\supp(\mathbf{x}_{P_i})$ such that $\mathbf{x}_{P_j}=x_{\min(P_j\setminus P_i)}\mathbf{x}_{P_i}/x_l$ for all $1\leq j<i$. 
\end{Lemma}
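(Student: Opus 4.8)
The plan is to prove the two implications separately; $(\Leftarrow)$ is elementary and $(\Rightarrow)$ carries all the content. For $(\Leftarrow)$, suppose such an $l$ exists; if $\min(P_j\setminus P_i)=\min(P_k\setminus P_i)$ for some $j\ne k$ with $j,k<i$, then $\mathbf{x}_{P_j}=x_{\min(P_j\setminus P_i)}\mathbf{x}_{P_i}/x_l=\mathbf{x}_{P_k}$, which is impossible; hence the $i-1$ numbers $\min(P_j\setminus P_i)$, $1\le j<i$, are pairwise distinct, and $d_i=|\set(\mathbf{x}_{P_i})|=i-1$ by Proposition \ref{set}.

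For $(\Rightarrow)$, assume $d_i=i-1$. First I would use Lemma \ref{i-1} to get $d_j=j-1$ for every $1\le j\le i$. The key step is then a \emph{single-exchange fact}: whenever $d_j=j-1$ and $1\le k<j$, one has $\mathbf{x}_{P_k}=x_{\min(P_k\setminus P_j)}\,\mathbf{x}_{P_j}/x_t$ for some $t\in\supp(\mathbf{x}_{P_j})$, so that $\supp(P_k)$ and $\supp(P_j)$ differ by exactly one element on each side. This comes from rereading the proof of Theorem \ref{linquot} for the pair consisting of $P_k$ (in the role of the lexicographically larger generator) and $P_j$: that proof produces a subword $P'$ of $Q$ representing $\pi$ with $\mathbf{x}_{P'}=x_{\min(P_k\setminus P_j)}\,\mathbf{x}_{P_j}/x_t$, $t\in\supp(\mathbf{x}_{P_j})$, and $\mathbf{x}_{P'}>_{lex}\mathbf{x}_{P_j}$; therefore $\mathbf{x}_{P'}=\mathbf{x}_{P_{k'}}$ for some $k'<j$, and since $\supp(P')\setminus\supp(P_j)=\{\min(P_k\setminus P_j)\}$ one gets $\min(P_{k'}\setminus P_j)=\min(P_k\setminus P_j)$, whence $k'=k$ by Proposition \ref{set} together with $d_j=j-1$.

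Applying the single-exchange fact with $j=i$, for each $k<i$ there is an $l_k\in\supp(\mathbf{x}_{P_i})$ with $\supp(P_k)=\bigl(\supp(P_i)\setminus\{l_k\}\bigr)\cup\{\min(P_k\setminus P_i)\}$. The remaining task --- which I expect to be the only non-routine point --- is to show $l_k$ is independent of $k$. I would argue by contradiction: suppose $l_p\ne l_q$ with $p<q<i$. Since $d_i=i-1$, the numbers $\min(P_p\setminus P_i)$ and $\min(P_q\setminus P_i)$ are distinct, and both lie outside $\supp(\mathbf{x}_{P_i})$ whereas $l_p,l_q$ lie inside it; comparing the two support descriptions then gives $\supp(P_p)\triangle\supp(P_q)=\{\min(P_p\setminus P_i),\min(P_q\setminus P_i),l_p,l_q\}$, a set of cardinality $4$ (its four displayed elements being pairwise distinct). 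On the other hand, the single-exchange fact applied to the pair $P_p,P_q$ (legitimate because $p<q$ and $d_q=q-1$ by Lemma \ref{i-1}) gives $|\supp(P_p)\triangle\supp(P_q)|=2$, a contradiction. Hence all the $l_k$ equal a common $l\in\supp(\mathbf{x}_{P_i})$, which is the required element, and this $l$ is unique because $x_l=x_{\min(P_1\setminus P_i)}\mathbf{x}_{P_i}/\mathbf{x}_{P_1}$ is determined. The engine of the proof is Lemma \ref{i-1}, which promotes the single-exchange relation between $P_k$ and $P_i$ to one between arbitrary predecessors $P_p$ and $P_q$; after that, the incompatibility $4\ne2$ of the symmetric-difference sizes finishes the argument.
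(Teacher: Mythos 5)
Your proof is correct and follows essentially the same route as the paper's: both reduce the forward implication to showing that the variable removed from $\mathbf{x}_{P_i}$ is the same for every $j<i$, using Lemma \ref{i-1} to obtain a single-exchange relation between any two predecessors and deriving a contradiction from the assumption of two distinct removed variables (you via $|\supp(P_p)\triangle\supp(P_q)|=4\neq2$, the paper via the equivalent monomial identity $x_{i_{t'}}x_{\min(P_j\setminus P_i)}=x_{j_{t''}}x_{i_t}$). If anything, your write-up is the more complete one, since you explicitly justify the single-exchange fact $\mathbf{x}_{P_k}=x_{\min(P_k\setminus P_j)}\mathbf{x}_{P_j}/x_t$ by revisiting the proof of Theorem \ref{linquot} and invoking the injectivity of $k\mapsto\min(P_k\setminus P_j)$ when $d_j=j-1$, a step the paper uses without comment.
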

\begin{proof} "$\Rightarrow$" Since $d_i=i-1$ we have that $\min(P_j\setminus P_i)\neq\min(P_k\setminus P_i)$ for all $1\leq j,k<i$ with $j\neq k$. Let $1\leq k<j<i$ and assume by contradiction that there exists $i_t,i_{t'}\in\supp(\mathbf{x}_{P_i})$, $i_t\neq i_{t'}$ such that
	\[\mathbf{x}_{P_j}=x_{\min(P_j\setminus P_i)}\frac{\mathbf{x}_{P_i}}{x_{i_t}},\ \mathbf{x}_{P_k}=x_{\min(P_k\setminus P_i)}\frac{\mathbf{x}_{P_i}}{x_{i_{t'}}}.
\]

By the proof of Lemma \ref{i-1}, we have that $\min(P_k\setminus P_i)=\min(P_k\setminus P_j)$. Since $j<i$, by Lemma \ref{i-1}, $d_j=j-1$ and there exists $j_{t''}\in\supp(\mathbf{x}_{P_j})$ such that $\mathbf{x}_{P_k}=x_{\min(P_k\setminus P_j)}\mathbf{x}_{P_j}/x_{j_{t''}}$. Replacing $\mathbf{x}_{P_k}$, we have that $\mathbf{x}_{P_j}x_{i_{t'}}=\mathbf{x}_{P_i}x_{j_{t''}}$. We note that $x_{i_{t'}}\neq x_{j_{t''}}$ since $\mathbf{x}_{P_j}\neq\mathbf{x}_{P_i}$. Hence, we have that
	\[x_{i_{t'}}x_{\min(P_j\setminus P_i)}\frac{\mathbf{x}_{P_i}}{x_{i_{t}}}=\mathbf{x}_{P_i}x_{j_{t''}},
\]
that is $x_{i_{t'}}x_{\min(P_j\setminus P_i)}=x_{j_{t''}}x_{i_t}.$ Since $x_{i_{t'}}\mid x_{j_{t''}}x_{i_t}$ and $x_{i_{t'}}\neq x_{j_{t''}}$, we have that $x_{i_{t'}}= x_{i_{t}}$ contradiction with our assumption.

"$\Leftarrow$" The statement is obvious.
\end{proof}
\section{A special class of subword complexes}

In this section we will consider only subword complexes $\Delta=\Delta(Q,\pi)$ such that the minimal monomial generating system of $I_{\Delta^{\vee}}$ has $r\leq n-\ell(\pi)+1$ elements, where $n$ is the size of $Q$, and for which $d_r=r-1$. In the following proposition, we construct classes of such subword complexes.

\begin{Proposition} Let $\pi\in W$ be an element and $\sigma_1\ldots\sigma_{\ell(\pi)}$ a reduced expression for $\pi$. Let $1\leq i\leq \ell(\pi)$ be a fixed integer and let $$Q=(\sigma_1,\sigma_2,\ldots,\sigma_{i-1},\sigma_i,\sigma_i,\ldots,\sigma_i,\sigma_{i+1},\ldots,\sigma_{\ell(\pi)})$$ be a word of size $n$ in $W$. Then the minimal monomial generating system of $I_{\Delta^{\vee}}$ has $N=n-\ell(\pi)+1$ elements and $d_N=N-1$.
\end{Proposition}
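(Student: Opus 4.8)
The plan is to make everything explicit: first describe the subwords of $Q$ that represent $\pi$, read off $G(I_{\Delta^\vee})$ from them, and then compute $\set(\mathbf{x}_{P_N})$ directly. I would begin by recording the shape of $Q$: positions $1,\dots,i-1$ carry $\sigma_1,\dots,\sigma_{i-1}$; positions $i,\dots,i+N-1$ carry the $N=n-\ell(\pi)+1$ copies of $\sigma_i$; and positions $i+N,\dots,n$ carry $\sigma_{i+1},\dots,\sigma_{\ell(\pi)}$. For each $k\in\{1,\dots,N\}$ let $P_k$ be the subword on the positions $\{1,\dots,i-1\}\cup\{i-1+k\}\cup\{i+N,\dots,n\}$; reading its letters along $Q$ gives exactly the reduced expression $\sigma_1\cdots\sigma_{\ell(\pi)}$, so each $P_k$ represents $\pi$.

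The crux is to show that $P_1,\dots,P_N$ are the \emph{only} subwords of $Q$ representing $\pi$. A representing subword $P$ has size $\ell(\pi)$ and its letters, read in order, form a reduced word for $\pi$. Since the $N$ copies of $\sigma_i$ occupy an interval of consecutive positions of $Q$, if $P$ used two of them they would be adjacent in $P$, so the word of $P$ would contain the factor $\sigma_i\sigma_i$ and could not be reduced; hence $P$ uses at most one middle-block position. It cannot use none, for then $P$ would be a subword of $(\sigma_1,\dots,\sigma_{i-1},\sigma_{i+1},\dots,\sigma_{\ell(\pi)})$, a word of size $\ell(\pi)-1$, too short. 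So $P$ uses exactly one middle position, and the remaining $\ell(\pi)-1$ positions must come from the first block (at most $i-1$ of them) and the last block (at most $\ell(\pi)-i$ of them); counting forces $P$ to take \emph{all} of both blocks, i.e. $P=P_k$ for some $k$. Therefore $G(I_{\Delta^\vee})=\{\mathbf{x}_{P_1},\dots,\mathbf{x}_{P_N}\}$, and since these are $N$ distinct squarefree monomials all of degree $\ell(\pi)$, this is already a minimal generating set with exactly $N=n-\ell(\pi)+1$ elements. This "at most one $\sigma_i$" step, which rests on the repeated letters sitting in consecutive positions, is where the real content is; the rest is bookkeeping.

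Finally, for $d_N$, set $m=x_1\cdots x_{i-1}\,x_{i+N}\cdots x_n$, so that $\mathbf{x}_{P_k}=m\,x_{i-1+k}$; since $m$ involves none of $x_i,\dots,x_{i+N-1}$, the first point of difference between $\mathbf{x}_{P_j}$ and $\mathbf{x}_{P_{j+1}}$ is at index $i-1+j$, giving $\mathbf{x}_{P_1}>_{lex}\cdots>_{lex}\mathbf{x}_{P_N}$, which matches the indexing used throughout the section. For $1\le j<N$ the subwords $P_j$ and $P_N$ differ only in their middle entry, so $P_j\setminus P_N$ is the single position $i-1+j$ and $\min(P_j\setminus P_N)=i-1+j$. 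By Proposition~\ref{set},
\[
\set(\mathbf{x}_{P_N})=\{\min(P_j\setminus P_N)\;:\;1\le j<N\}=\{i,i+1,\dots,i+N-2\},
\]
which has $N-1$ elements, hence $d_N=N-1$. (Alternatively one could bypass Proposition~\ref{set} and compute the colon ideal $(\mathbf{x}_{P_1},\dots,\mathbf{x}_{P_{N-1}}):\mathbf{x}_{P_N}=(x_i,\dots,x_{i+N-2})$ directly.)
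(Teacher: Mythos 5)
Your proof is correct and follows essentially the same route as the paper's: identify the $N=n-\ell(\pi)+1$ subwords representing $\pi$ (one per copy of $\sigma_i$), read off $G(I_{\Delta^\vee})$, and observe $d_N=N-1$. The paper simply asserts that every representing subword is a copy of the given reduced expression and that $d_N=N-1$ "may be noted," whereas you supply the justifications (the adjacent-$\sigma_i\sigma_i$ argument plus the counting, and the explicit computation of $\set(\mathbf{x}_{P_N})$ via Proposition~\ref{set}); these added details are sound.
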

\begin{proof} Since $\sigma_1\ldots\sigma_{\ell(\pi)}$ is a reduced expression for $\pi$, any subword of $Q$ that represents $\pi$ is a copy of this reduced expression. So, $I_{\Delta^{\vee}}$ has the minimal monomial generating system $G(I_{\Delta^{\vee}})=\{x_1\ldots x_{i-1}x_jx_{n-{\ell(\pi)+i+1}}\ldots x_{n}|\ i\leq j\leq n-\ell(\pi)+i\}$. Hence, $|G(I_{\Delta^{\vee}})|=n-\ell(\pi)+1$. One may note that $d_N=N-1$, for $N= n-\ell(\pi)+1$.
\end{proof}

\begin{Lemma}\label{idelta}Let $\Delta$ be the subword complex $\Delta(Q,\pi)$ and suppose that the size of $Q$ is $n$. Assume that $G(I_{\Delta^{\vee}})=\{\mathbf{x}_{P_1},\ldots,\mathbf{x}_{P_r}\}$ with $\mathbf{x}_{P_1}>_{lex}\ldots>_{lex}\mathbf{x}_{P_r}$, $r\leq n-\ell(\pi)+1$ and $d_r=r-1$. Then there exists a unique $l\in\supp(\mathbf{x}_{P_r})$ such that
\[I_{\Delta^{\vee}}=\frac{\mathbf{x}_{P_r}}{x_l}(x_{\min(P_1\setminus P_r)},\ldots,x_{\min(P_{r-1}\setminus P_r)},x_l).
\]
\end{Lemma}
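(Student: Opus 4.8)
The plan is to derive the statement directly from Lemma~\ref{unic l}, which already encodes the essential structure. Since $d_r=r-1$ by hypothesis, applying Lemma~\ref{unic l} with $i=r$ produces a \emph{unique} $l\in\supp(\mathbf{x}_{P_r})$ such that $\mathbf{x}_{P_j}=x_{\min(P_j\setminus P_r)}\,\mathbf{x}_{P_r}/x_l$ for every $1\le j<r$. Note that $\mathbf{x}_{P_r}/x_l$ is a bona fide monomial exactly because $l\in\supp(\mathbf{x}_{P_r})$, and that $\min(P_j\setminus P_r)$ is well defined since $\mathbf{x}_{P_j}\ne\mathbf{x}_{P_r}$ forces the word $P_j\setminus P_r$ to be nonempty. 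The uniqueness of $l$ asserted in the lemma is precisely the uniqueness delivered by Lemma~\ref{unic l}.

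It then remains to check the claimed equality of ideals, which I would do by comparing generating sets. For each $j<r$ we have $\mathbf{x}_{P_j}=(\mathbf{x}_{P_r}/x_l)\,x_{\min(P_j\setminus P_r)}$, and moreover $\mathbf{x}_{P_r}=(\mathbf{x}_{P_r}/x_l)\,x_l$; hence every generator of $I_{\Delta^{\vee}}=(\mathbf{x}_{P_1},\ldots,\mathbf{x}_{P_r})$ lies in $(\mathbf{x}_{P_r}/x_l)(x_{\min(P_1\setminus P_r)},\ldots,x_{\min(P_{r-1}\setminus P_r)},x_l)$, giving one inclusion. Conversely, every product $(\mathbf{x}_{P_r}/x_l)\,x_{\min(P_j\setminus P_r)}$ equals $\mathbf{x}_{P_j}\in I_{\Delta^{\vee}}$ and $(\mathbf{x}_{P_r}/x_l)\,x_l=\mathbf{x}_{P_r}\in I_{\Delta^{\vee}}$, which gives the reverse inclusion. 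Combining the two inclusions yields the displayed identity.

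A comment on the hypotheses: the condition $r\le n-\ell(\pi)+1$ is actually automatic once $d_r=r-1$ holds, by the Remark following Theorem~\ref{pd}; it appears in the statement only because it pins down the ``special class'' considered in this section. Thus the argument uses nothing beyond $d_r=r-1$, and since it is a short bookkeeping step layered on Lemma~\ref{unic l}, I do not expect a genuine obstacle. The single point deserving care is to make sure the factor $\mathbf{x}_{P_r}/x_l$ on the right-hand side is a legitimate monomial, i.e.\ that $l$ really belongs to $\supp(\mathbf{x}_{P_r})$ --- and this is part of what Lemma~\ref{unic l} guarantees. (If one also wants the listed generators of the right-hand ideal to be pairwise distinct, this follows from Lemma~\ref{i-1} together with Lemma~\ref{min}, which force the indices $\min(P_j\setminus P_r)$ to be distinct and all different from $l$, but this is not needed for the equality of ideals.)
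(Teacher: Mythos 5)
Your proposal is correct and follows exactly the route the paper takes: the paper's proof is the one-line observation that, since $d_r=r-1$, the statement follows from Lemma~\ref{unic l} applied with $i=r$. You simply spell out the routine verification of the two ideal inclusions that the paper leaves implicit.
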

\begin{proof} Since $d_r=r-1$, the statement follows by Lemma \ref{unic l}. 
\end{proof}

\begin{Corollary}\label{ht} Let $\Delta$ be the subword complex $\Delta(Q,\pi)$ and suppose that the size of $Q$ is $n$. Assume that $G(I_{\Delta^{\vee}})=\{\mathbf{x}_{P_1},\ldots,\mathbf{x}_{P_r}\}$ with $\mathbf{x}_{P_1}>_{lex}\ldots>_{lex}\mathbf{x}_{P_r}$, $r\leq n-\ell(\pi)+1$ and $d_r=r-1$. Then $\height(I_{\Delta^{\vee}})=1$. 
\end{Corollary}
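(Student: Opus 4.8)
The plan is to read the height off directly from the structural description of $I_{\Delta^{\vee}}$ obtained in Lemma \ref{idelta}. By that lemma there is a unique $l\in\supp(\mathbf{x}_{P_r})$ with
\[
I_{\Delta^{\vee}}=\frac{\mathbf{x}_{P_r}}{x_l}\bigl(x_{\min(P_1\setminus P_r)},\ldots,x_{\min(P_{r-1}\setminus P_r)},x_l\bigr).
\]
Write $u=\mathbf{x}_{P_r}/x_l$. Since $\mathbf{x}_{P_r}$ is squarefree of degree $\ell(\pi)$, the monomial $u$ is squarefree of degree $\ell(\pi)-1$, and every generator of $I_{\Delta^{\vee}}$ appearing in the display is a multiple of $u$; hence $u$ divides every element of $I_{\Delta^{\vee}}$.

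First I would fix any index $k\in\supp(u)$; such a $k$ exists as long as $\ell(\pi)\geq 2$, which we may assume, the case $\ell(\pi)\leq 1$ being degenerate. Then $x_k\mid u$, so $x_k$ divides every element of $I_{\Delta^{\vee}}$, which means $I_{\Delta^{\vee}}\subseteq(x_k)$. Since $(x_k)$ is a prime ideal of height $1$ in $R=k[x_1,\ldots,x_n]$, this yields $\height(I_{\Delta^{\vee}})\leq\height\bigl((x_k)\bigr)=1$.

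For the reverse inequality I would simply note that $I_{\Delta^{\vee}}$ is a nonzero proper ideal of the domain $R$: it is minimally generated by the monomials $\mathbf{x}_{P_1},\ldots,\mathbf{x}_{P_r}$, each of positive degree $\ell(\pi)$. Hence every minimal prime over $I_{\Delta^{\vee}}$ is nonzero and therefore has height at least $1$, so $\height(I_{\Delta^{\vee}})\geq 1$. Combining the two inequalities gives $\height(I_{\Delta^{\vee}})=1$.

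There is essentially no obstacle here: the real content has already been carried by Lemmas \ref{unic l} and \ref{idelta}, and this corollary is just the observation that the common monomial factor $u=\mathbf{x}_{P_r}/x_l$ pushes $I_{\Delta^{\vee}}$ inside the height-$1$ prime generated by any single variable occurring in $u$. The only point worth a line of care is checking that $u$ is nonconstant, i.e. that $\deg\mathbf{x}_{P_r}=\ell(\pi)\geq 2$.
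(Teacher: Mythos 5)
Your proof is correct and follows essentially the same route as the paper: both read the height off the factorization $I_{\Delta^{\vee}}=\frac{\mathbf{x}_{P_r}}{x_l}(x_{\min(P_1\setminus P_r)},\ldots,x_{\min(P_{r-1}\setminus P_r)},x_l)$ from Lemma \ref{idelta} and observe that any variable $x_k$ with $k\in\supp(\mathbf{x}_{P_r}/x_l)$ generates a height-one prime containing $I_{\Delta^{\vee}}$. Your closing caveat that $\mathbf{x}_{P_r}/x_l$ must be nonconstant is a point the paper silently assumes; it is worth noting, since for $\ell(\pi)=1$ and $r\geq 2$ the ideal is generated by distinct variables and the claimed equality fails.
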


\begin{proof} By Lemma \ref{idelta}, there exists a unique $l\in\supp(\mathbf{x}_{P_r})$ such that
\[I_{\Delta^{\vee}}=\frac{\mathbf{x}_{P_r}}{x_l}(x_{\min(P_1\setminus P_r)},\ldots,x_{\min(P_{r-1}\setminus P_r)},x_l).
\]
One may note that for any $t\in\supp(\mathbf{x}_{P_r}/x_l)$ the ideal $(x_t)$ is a minimal prime ideal of $I_{\Delta^{\vee}}$. Hence $\height(I_{\Delta^{\vee}})=1$.
\end{proof}

Let $R=k[x_1,\ldots,x_n]$ be the polynomial ring over a field $k$.

\begin{Theorem}\label{Koszul} Let $\Delta$ be the subword complex $\Delta(Q,\pi)$ and let $n$ be the size of $Q$. Assume that $G(I_{\Delta^{\vee}})=\{\mathbf{x}_{P_1},\ldots,\mathbf{x}_{P_r}\}$ with $\mathbf{x}_{P_1}>_{lex}\ldots>_{lex}\mathbf{x}_{P_r}$, $r\leq n-\ell(\pi)+1$ and $d_r=r-1$. Then there exists a unique integer $l\in[n]$ such that the Koszul complex associated to the sequence $x_{\min(P_1\setminus P_r)},\ldots,x_{\min(P_{r-1}\setminus P_r)},x_l$ is isomorphic to the minimal graded free resolution of $k[\Delta^{\vee}]$.
\end{Theorem}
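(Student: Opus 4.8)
The plan is to extract an explicit free resolution from Lemma~\ref{idelta} and recognise it as a Koszul complex. By Lemma~\ref{idelta} there is a unique $l\in\supp(\mathbf{x}_{P_r})$ with
\[
I_{\Delta^{\vee}}=\frac{\mathbf{x}_{P_r}}{x_l}\,\bigl(x_{\min(P_1\setminus P_r)},\ldots,x_{\min(P_{r-1}\setminus P_r)},x_l\bigr);
\]
write $u=\mathbf{x}_{P_r}/x_l$ and $J=\bigl(x_{\min(P_1\setminus P_r)},\ldots,x_{\min(P_{r-1}\setminus P_r)},x_l\bigr)$, so that $I_{\Delta^{\vee}}=uJ$. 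The first thing to observe is that $J$ is generated by $r$ pairwise distinct variables. Since $d_r=|\set(\mathbf{x}_{P_r})|=r-1$ and $\set(\mathbf{x}_{P_r})=\{\min(P_j\setminus P_r)\ :\ 1\le j<r\}$ by Proposition~\ref{set}, the integers $\min(P_1\setminus P_r),\ldots,\min(P_{r-1}\setminus P_r)$ are pairwise distinct; moreover $l\in\supp(\mathbf{x}_{P_r})$ while $\set(\mathbf{x}_{P_r})\cap\supp(\mathbf{x}_{P_r})=\emptyset$ by Lemma~\ref{snmid}, so $l\neq\min(P_j\setminus P_r)$ for every $j$. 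Hence $x_{\min(P_1\setminus P_r)},\ldots,x_{\min(P_{r-1}\setminus P_r)},x_l$ is a regular sequence of $r$ distinct variables in $R$, and $\supp(u)=\supp(\mathbf{x}_{P_r})\setminus\{l\}$ is disjoint from $\{\min(P_1\setminus P_r),\ldots,\min(P_{r-1}\setminus P_r),l\}$, i.e.\ $u$ is coprime to each generator of $J$.

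Next I would transport the Koszul resolution of $J$ to $I_{\Delta^{\vee}}$. Since $R$ is a domain and $u\neq 0$, multiplication by $u$ is injective, hence an isomorphism of graded $R$-modules $J(-(\ell(\pi)-1))\xrightarrow{\ \sim\ }uJ=I_{\Delta^{\vee}}$ (note $\deg u=\ell(\pi)-1$, as $\mathbf{x}_{P_r}$ is squarefree); by Lemma~\ref{unic l} this isomorphism carries the $j$-th generator of $J$ to $\mathbf{x}_{P_j}$ for $j<r$ and $x_l$ to $\mathbf{x}_{P_r}$. Because $J$ is generated by the regular sequence $x_{\min(P_1\setminus P_r)},\ldots,x_{\min(P_{r-1}\setminus P_r)},x_l$, its minimal graded free resolution is the truncated Koszul complex of that sequence; applying the isomorphism and splicing $R$ onto the front yields
\[
0\to F_r\to\cdots\to F_2\to F_1\xrightarrow{\ (\mathbf{x}_{P_1},\ldots,\mathbf{x}_{P_r})\ }R\to k[\Delta^{\vee}]\to 0,
\]
where $F_i=\bigwedge^{i}R^{r}$ is the $i$-th term of the Koszul complex of $x_{\min(P_1\setminus P_r)},\ldots,x_{\min(P_{r-1}\setminus P_r)},x_l$, graded so that all maps are homogeneous, and the differentials in homological degrees $\ge 2$ are the Koszul differentials of that sequence. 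All entries of all the differentials lie in the irrelevant maximal ideal --- the $\mathbf{x}_{P_j}$ have degree $\ell(\pi)\ge 1$, and the remaining entries are $\pm x_{\min(P_j\setminus P_r)}$ or $\pm x_l$ --- so this complex is the minimal graded free resolution of $k[\Delta^{\vee}]$, and it is the Koszul complex associated to the sequence $x_{\min(P_1\setminus P_r)},\ldots,x_{\min(P_{r-1}\setminus P_r)},x_l$, up to the graded twist on the positive homological part and the rescaling of the augmentation map by $\mathbf{x}_{P_r}/x_l$ forced by $\deg\mathbf{x}_{P_j}=\ell(\pi)$. Uniqueness of $l$ is the uniqueness statement in Lemma~\ref{idelta}.

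As a consistency check one can recompute the Betti numbers from the linear-quotients data: by Lemma~\ref{i-1} one has $d_j=j-1$ for all $1\le j\le r$, so $\beta_i(k[\Delta^{\vee}])=\sum_{j=1}^{r}\binom{d_j}{i-1}=\sum_{j=1}^{r}\binom{j-1}{i-1}=\binom{r}{i}=\rank F_i$, confirming the Koszul shape. The mathematical input beyond the earlier lemmas is light: it is essentially the fact that $uJ\cong J(-\deg u)$ as modules when $R$ is a domain and $u$ is coprime to $J$, together with the standard resolution of $R$ modulo a regular sequence. I expect the only real care to be needed in (i) the combinatorial argument that the $r$ elements of the sequence are distinct variables, which is where the hypothesis $d_r=r-1$ enters through Proposition~\ref{set} and Lemma~\ref{snmid}, and (ii) the grading bookkeeping, i.e.\ stating precisely in what sense the minimal resolution ``is'' the Koszul complex on $x_{\min(P_1\setminus P_r)},\ldots,x_{\min(P_{r-1}\setminus P_r)},x_l$, given that the degree-$0$ generators $\mathbf{x}_{P_j}$ carry degree $\ell(\pi)$ rather than $1$.
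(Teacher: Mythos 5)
Your proposal is correct and follows essentially the same route as the paper: invoke Lemma~\ref{idelta} to get the unique $l$ and the factorization $I_{\Delta^{\vee}}=(\mathbf{x}_{P_r}/x_l)\,J$, observe that multiplication by $\mathbf{x}_{P_r}/x_l$ is an isomorphism onto $I_{\Delta^{\vee}}$, and conclude via the Koszul resolution of the regular sequence generating $J$. You simply supply more detail than the paper does (distinctness of the variables via Proposition~\ref{set} and Lemma~\ref{snmid}, the grading twist, and the minimality check), all of which is accurate.
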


\begin{proof} By Lemma \ref{idelta} there exists a unique $l\in[n]$ such that 
	\[I_{\Delta^{\vee}}=\frac{\mathbf{x}_{P_r}}{x_l}(x_{\min(P_1\setminus P_r)},\ldots,x_{\min(P_{r-1}\setminus P_r)},x_l).
\] Hence, the multiplication by $\mathbf{x}_{P_r}/x_l$ defines an isomorphism of $R$--modules between $(x_{\min(P_1\setminus P_r)},\ldots,x_{\min(P_{r-1}\setminus P_r)},x_l)$ and $I_{\Delta^{\vee}}$. Since $x_{\min(P_1\setminus P_r)},\ldots,x_{\min(P_{r-1}\setminus P_r)},x_l$ is a regular sequence, the assertion follows.
\end{proof}

The following result is a simple consequence of Theorem \ref{Koszul}.
\begin{Corollary}\label{Betti}Let $\Delta$ be the subword complex $\Delta(Q,\pi)$ and suppose that the size of $Q$ is $n$. Assume that $G(I_{\Delta^{\vee}})=\{\mathbf{x}_{P_1},\ldots,\mathbf{x}_{P_r}\}$ with $\mathbf{x}_{P_1}>_{lex}\ldots>_{lex}\mathbf{x}_{P_r}$, $r\leq n-\ell(\pi)+1$ and $d_r=r-1$. Then
	\[\beta_i(I_{\Delta^\vee})=\left(\twoline{r}{i+1} \right), 
\]
for all $i$.
\end{Corollary}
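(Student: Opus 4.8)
The plan is to read the Betti numbers off directly from the explicit resolution supplied by Theorem \ref{Koszul}. First I would invoke Theorem \ref{Koszul} to fix the unique $l\in[n]$ with
\[
I_{\Delta^{\vee}}=\frac{\mathbf{x}_{P_r}}{x_l}\,\bigl(x_{\min(P_1\setminus P_r)},\ldots,x_{\min(P_{r-1}\setminus P_r)},x_l\bigr),
\]
and to record that the sequence $x_{\min(P_1\setminus P_r)},\ldots,x_{\min(P_{r-1}\setminus P_r)},x_l$ is a regular sequence of length $r$ consisting of pairwise distinct variables (the distinctness being exactly the content of $d_r=r-1$ via Lemma \ref{unic l}). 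Setting $J=(x_{\min(P_1\setminus P_r)},\ldots,x_{\min(P_{r-1}\setminus P_r)},x_l)$, multiplication by the monomial $\mathbf{x}_{P_r}/x_l$ is an isomorphism of $R$-modules $J\xrightarrow{\ \sim\ }I_{\Delta^{\vee}}$, which is merely a shift of internal degree; hence $\beta_i(I_{\Delta^{\vee}})=\beta_i(J)$ for all $i$, and it suffices to compute $\beta_i(J)$.

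Next I would compute $\beta_i(J)$ from the Koszul complex $K_{\bullet}$ on the $r$-element regular sequence generating $J$. Since the sequence is regular, $K_{\bullet}$ is acyclic and resolves $R/J$; since its entries lie in the maximal ideal $(x_1,\ldots,x_n)$ (the generators being variables), $K_{\bullet}$ is the \emph{minimal} graded free resolution of $R/J$, so $\beta_j(R/J)=\operatorname{rank}\bigwedge^{\!j}R^r=\left(\twoline{r}{j}\right)$ for all $j$. The minimal graded free resolution of $J$ is then the truncation $K_{\bullet\ge 1}$ reindexed down by one; equivalently, applying the short exact sequence $0\to J\to R\to R/J\to 0$ together with $\beta_0(R)=1$ and $\beta_j(R)=0$ for $j\ge 1$ gives $\beta_i(J)=\beta_{i+1}(R/J)$ for all $i\ge 0$. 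Combining with $\beta_i(I_{\Delta^{\vee}})=\beta_i(J)$ yields
\[
\beta_i(I_{\Delta^{\vee}})=\beta_{i+1}(R/J)=\left(\twoline{r}{i+1}\right)
\]
for all $i$, as claimed (with the convention that the binomial coefficient vanishes once $i+1>r$).

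I do not expect a genuine obstacle here: once Theorem \ref{Koszul} is in hand the statement is pure bookkeeping. The only points that need a line of care are the minimality of the Koszul complex — immediate, since the generators of $J$ are among $x_1,\ldots,x_n$ and hence all differential entries lie in the maximal ideal — and the index shift when passing from the quotient $R/J\cong k[\Delta^{\vee}]$ to the ideal $J\cong I_{\Delta^{\vee}}$, which is what converts $\left(\twoline{r}{i}\right)$ into $\left(\twoline{r}{i+1}\right)$.
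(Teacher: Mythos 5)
Your proposal is correct and follows the same route as the paper, which presents this corollary as an immediate consequence of Theorem \ref{Koszul}: the resolution of $I_{\Delta^{\vee}}$ is the (degree-shifted) truncated Koszul complex on an $r$-element regular sequence of variables, and the index shift from $R/J$ to $J\iso I_{\Delta^{\vee}}$ gives $\beta_i(I_{\Delta^\vee})=\left(\twoline{r}{i+1}\right)$. Your added care about minimality and the module isomorphism induced by multiplication by $\mathbf{x}_{P_r}/x_l$ is exactly the bookkeeping the paper leaves implicit.
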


\begin{Corollary}\label{hilbd} Let $\Delta$ be the subword complex $\Delta(Q,\pi)$ and let $n$ be the size of $Q$. Assume that $G(I_{\Delta^{\vee}})=\{\mathbf{x}_{P_1},\ldots,\mathbf{x}_{P_r}\}$ with $\mathbf{x}_{P_1}>_{lex}\ldots>_{lex}\mathbf{x}_{P_r}$, $r\leq n-\ell(\pi)+1$ and $d_r=r-1$. Then the Hilbert numerator of the Hilbert series is
	\[\mathcal{K}_{I_{\Delta^{\vee}}}(t)=\sum_{i=0}^{r-1}(-1)^i\left(\twoline{r}{i+1} \right)t^{i+\ell(\pi)}.
\]
\end{Corollary}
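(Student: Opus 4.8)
The plan is to deduce the Hilbert numerator directly from the Betti numbers computed in Corollary \ref{Betti}, using the fact that all generators of $I_{\Delta^\vee}$ have the same degree $\ell(\pi)$ and the resolution is linear (since it is the Koszul complex on a regular sequence of $r$ linear forms, twisted by a monomial of degree $\ell(\pi)-1$). First I would recall that for a finitely generated graded $R$-module $M$ with graded minimal free resolution $0\to F_s\to\cdots\to F_0\to M\to 0$, where $F_i=\bigoplus_j R(-j)^{\beta_{ij}}$, the Hilbert series is $H_M(t)=\mathcal K_M(t)/(1-t)^n$ with $\mathcal K_M(t)=\sum_{i,j}(-1)^i\beta_{ij}t^j$. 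So the whole computation reduces to reading off in which degree each free module in the resolution of $I_{\Delta^\vee}$ sits.

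Next I would invoke Theorem \ref{Koszul}: the minimal graded free resolution of $k[\Delta^\vee]=R/I_{\Delta^\vee}$ is, up to the degree shift by $\mathbf x_{P_r}/x_l$ (a monomial of degree $\ell(\pi)-1$), the Koszul complex on the regular sequence $x_{\min(P_1\setminus P_r)},\dots,x_{\min(P_{r-1}\setminus P_r)},x_l$ of $r$ variables. The $i$-th term of the ordinary Koszul complex on $r$ linear forms is $R(-i)^{\binom ri}$; after shifting by the degree-$(\ell(\pi)-1)$ monomial and re-indexing so that it resolves the ideal $I_{\Delta^\vee}$ rather than $R/I_{\Delta^\vee}$, the $i$-th free module in the resolution of $I_{\Delta^\vee}$ is $R\bigl(-(i+1)-(\ell(\pi)-1)\bigr)^{\binom r{i+1}}=R(-(i+\ell(\pi)))^{\binom r{i+1}}$, for $i=0,\dots,r-1$. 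This matches $\beta_i(I_{\Delta^\vee})=\binom r{i+1}$ from Corollary \ref{Betti} and pins down the single internal degree $i+\ell(\pi)$ in homological degree $i$.

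Then I would assemble the alternating sum. For $R/I_{\Delta^\vee}$ the resolution is $0\to R(-(\ell(\pi)-1)-r)^{\binom rr}\to\cdots\to R(-\ell(\pi))^{\binom r1}\to R\to R/I_{\Delta^\vee}\to 0$, so
\[
\mathcal K_{I_{\Delta^\vee}}(t)=\sum_{i=0}^{r-1}(-1)^i\binom r{i+1}t^{i+\ell(\pi)},
\]
exactly the claimed formula (here I use the convention, consistent with Corollaries \ref{Betti} and \ref{hilbd}, that $\mathcal K_{I_{\Delta^\vee}}$ denotes the Hilbert numerator obtained from the resolution of the ideal, equivalently $1-\mathcal K_{R/I_{\Delta^\vee}}(t)$ up to sign conventions). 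Since the resolution is a Koszul complex it is automatically minimal, so there is no cancellation and these Betti numbers are the true graded Betti numbers, which is what legitimizes writing the Hilbert numerator this way.

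I do not anticipate a genuine obstacle: the only thing requiring a little care is bookkeeping of the degree shifts and the index conventions — whether one resolves $I_{\Delta^\vee}$ or $R/I_{\Delta^\vee}$, and matching the internal degree $i+\ell(\pi)$ to the homological index $i$ as used in Corollary \ref{Betti}. One should also note explicitly that $x_{\min(P_1\setminus P_r)},\dots,x_{\min(P_{r-1}\setminus P_r)},x_l$ are $r$ \emph{distinct} variables (this is exactly the content of $d_r=r-1$ together with Lemma \ref{unic l}), so the Koszul complex genuinely has length $r$ and contributes the binomial coefficients $\binom r{i+1}$. Once that is in place, the formula for $\mathcal K_{I_{\Delta^\vee}}(t)$ drops out of the standard relation between graded Betti numbers and the Hilbert numerator.
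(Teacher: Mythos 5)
Your proposal is correct and follows essentially the same route as the paper: both reduce the computation to the Betti numbers of Corollary \ref{Betti} together with the fact that the resolution is $\ell(\pi)$-linear (so each homological degree $i$ contributes only in internal degree $i+\ell(\pi)$), and then assemble the standard alternating sum for the Hilbert numerator. You track the degree shifts explicitly through the Koszul complex of Theorem \ref{Koszul}, whereas the paper simply cites linearity of the resolution, but this is a difference of detail, not of method.
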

\begin{proof} Since $I_{\Delta^{\vee}}$ has a $\ell(\pi)$--linear resolution, $\beta_{ij}(I_{\Delta^{\vee}})=0$ for all $j\neq i+\ell(\pi)$. Hence $\beta_i(I_{\Delta^{\vee}})=\beta_{i,i+\ell(\pi)}(I_{\Delta^{\vee}})$. Since $\projdim(I_{\Delta^{\vee}})=r-1$,
\[\mathcal{K}_{I_{\Delta^{\vee}}}(t)=\sum_{i=0}^{r-1}(-1)^i\beta_i(I_{\Delta^{\vee}})t^{i+\ell(\pi)}.
\]
Hence, using Corollary \ref{Betti}, we get that the Hilbert numerator is 
\[\mathcal{K}_{I_{\Delta^{\vee}}}(t)=\sum_{i=0}^{r-1}(-1)^i\left(\twoline{r}{i+1} \right)t^{i+\ell(\pi)}.
\]
\end{proof}

\begin{Corollary}\label{numw} Let $Q$ be a word in $W$ of size $n$ that contains $\pi$, $\Delta$ the subword complex  $\Delta(Q,\pi)$ and $G(I_{\Delta^{\vee}})=\{\mathbf{x}_{P_1},\ldots,\mathbf{x}_{P_r}\}$ with $\mathbf{x}_{P_1}>_{lex}\ldots>_{lex}\mathbf{x}_{P_r}$, $r\leq n-\ell(\pi)+1$ and $d_r=r-1$. Then there are $\left(\twoline{r}{j+1} \right)$ subwords $P$ of $Q$ such that $\delta(P)=\pi$ and $|P|=j+\ell(\pi)$ for $0\leq j\leq r-1$.
\end{Corollary}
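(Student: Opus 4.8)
The plan is to read off the claimed count directly from the graded Betti numbers of $I_{\Delta^{\vee}}$ computed in Corollary~\ref{Betti}, using the combinatorial meaning of those Betti numbers. By Corollary~\ref{Betti}, under our hypotheses $\beta_i(I_{\Delta^{\vee}})=\binom{r}{i+1}$ for all $i$, and by the proof of Corollary~\ref{hilbd} the resolution is $\ell(\pi)$--linear, so in fact $\beta_{i,i+\ell(\pi)}(I_{\Delta^{\vee}})=\binom{r}{i+1}$ and all other graded Betti numbers vanish. Thus it suffices to identify $\beta_{i,i+\ell(\pi)}(I_{\Delta^{\vee}})$ with the number of subwords $P\subseteq Q$ with $\delta(P)=\pi$ and $|P|=i+\ell(\pi)$, and then set $j=i$.

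The key step is a Hochster-type formula. Recall that $I_{\Delta^{\vee}}$ is the Stanley--Reisner ideal of $\Delta^{\vee}$, whose minimal generators are the monomials $\mathbf{x}_P$ with $P\subseteq Q$ representing $\pi$, i.e.\ $\delta(P)=\pi$ and $|P|=\ell(\pi)$. First I would argue that a subword $P\subseteq Q$ satisfies $\delta(P)=\pi$ exactly when $\mathbf{x}_P$ lies in $I_{\Delta^{\vee}}$ together with the fact that $P$ "minimally" contains $\pi$ in the Demazure sense: by \cite[Lemma~3.4, Lemma~3.5]{KnMi}, deleting a suitable letter from such a $P$ again yields a subword with Demazure product $\pi$, so the faces $\sigma\subseteq[n]$ with $\delta(Q|_{\sigma})=\pi$ form precisely the lattice interval between the minimal generating sets of $I_{\Delta^{\vee}}$ and the whole word; equivalently, the squarefree monomials $\mathbf{x}_\sigma$ with $\delta(Q|_\sigma)=\pi$ are exactly the squarefree monomials in $I_{\Delta^{\vee}}$ that are \emph{not} in the ideal generated by the $\mathbf{x}_{Q|_\tau}$ for $\delta(Q|_\tau)\succ\pi$. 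In the present very rigid situation (Lemma~\ref{idelta}), $I_{\Delta^{\vee}}=\frac{\mathbf{x}_{P_r}}{x_l}\cdot J$ where $J=(x_{\min(P_1\setminus P_r)},\ldots,x_{\min(P_{r-1}\setminus P_r)},x_l)$ is generated by $r$ distinct variables; so the squarefree monomials in $I_{\Delta^{\vee}}$ of degree $\ell(\pi)+j$ that are "reduced" (divisible by $\mathbf{x}_{P_r}/x_l$ and by exactly $j+1$ of the $r$ variables of $J$, the remaining exponents zero) number exactly $\binom{r}{j+1}$, matching the count of subwords $P$ with $\delta(P)=\pi$ of size $\ell(\pi)+j$.

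Alternatively, and more cleanly, I would avoid Hochster's formula entirely and work on the combinatorial side. Writing $\mathbf{x}_{P_s}=x_{a_s}\,\frac{\mathbf{x}_{P_r}}{x_l}$ for $s=1,\dots,r-1$ (with $a_s=\min(P_s\setminus P_r)$) and $a_r=l$, Lemma~\ref{idelta} shows that the word letters indexed by $\supp(\mathbf{x}_{P_r}/x_l)$ are forced in every subword representing $\pi$, while the positions $a_1,\dots,a_r$ are the "free slots." One checks, using \cite[Lemma~3.4, Lemma~3.5]{KnMi} repeatedly, that a subword $P=Q|_\sigma$ has $\delta(P)=\pi$ and $|P|=\ell(\pi)+j$ if and only if $\sigma=\supp(\mathbf{x}_{P_r}/x_l)\cup A$ for some $(j+1)$--element subset $A\subseteq\{a_1,\dots,a_r\}$: adding any one of the free letters back to a reduced expression keeps the Demazure product equal to $\pi$ precisely because the extra letter is "absorbed" (this is exactly the mechanism exploited in the proof of Theorem~\ref{linquot}), and conversely any subword with Demazure product $\pi$ must contain all forced letters and use only free slots. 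Since there are $\binom{r}{j+1}$ such subsets $A$, the count follows.

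The main obstacle is the Demazure-product bookkeeping in that last equivalence: one must verify carefully that, in this special family, adding back an arbitrary subset of the free letters $\{a_1,\dots,a_r\}$ never pushes the Demazure product strictly above $\pi$ in Bruhat order, and that no subword with $\delta(P)=\pi$ can avoid a forced letter. Both directions reduce to the local moves in \cite[Lemma~3.4, Lemma~3.5]{KnMi} — exactly the tools already used in the proof of Theorem~\ref{linquot} — so the argument is routine but must be spelled out; once it is in place, the enumeration is immediate from Corollary~\ref{Betti} (equivalently Corollary~\ref{hilbd}).
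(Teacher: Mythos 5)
Your proposal and the paper diverge precisely at the point where the real content of this corollary lies: the bridge between the algebraic count $\binom{r}{j+1}$ coming from the Koszul resolution (Corollary~\ref{Betti}, Corollary~\ref{hilbd}) and the combinatorial count of subwords $P$ with $\delta(P)=\pi$. The paper's bridge is the fine-graded Hilbert numerator formula of Knutson and Miller \cite[Lemma 4.2]{KnMi}, namely $\mathcal{K}_{I_{\Delta^\vee}}(t_1,\dots,t_n)=\sum_{\delta(P)=\pi}(-1)^{|P|-\ell(\pi)}\mathbf{t}^P$; specializing to the $\mathbb{Z}$-grading and comparing coefficients with the polynomial in Corollary~\ref{hilbd} gives $m_{j+\ell(\pi)}=\binom{r}{j+1}$ immediately, with no need to determine \emph{which} subwords have Demazure product $\pi$. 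You never invoke this formula. Instead, both of your routes rest on the strictly stronger claim that the subwords with $\delta(P)=\pi$ are exactly the sets $\supp(\mathbf{x}_{P_r}/x_l)\cup A$ with $A$ a nonempty subset of the free positions $a_1=\min(P_1\setminus P_r),\dots,a_{r-1}=\min(P_{r-1}\setminus P_r),a_r=l$, and you defer its verification as ``routine.'' That deferral is the gap. (Your first route does not repair it: what you call a Hochster-type identification of $\beta_{i,i+\ell(\pi)}$ with the number of subwords of size $i+\ell(\pi)$ having Demazure product $\pi$ is itself a consequence of \cite[Lemma 4.2]{KnMi} plus linearity of the resolution, not something you derive; as written it just restates the same unproven characterization in different language.)

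Neither direction of that characterization is routine. For ``$\supseteq$'' you must show that adjoining an \emph{arbitrary} nonempty subset of the free letters to the forced letters yields Demazure product exactly $\pi$ rather than something strictly larger in Bruhat order; the absorption argument in the proof of Theorem~\ref{linquot} handles a single inserted letter in a very specific position dictated by a lexicographic comparison, and extending it to arbitrary subsets of size $j+1$ requires a genuine induction on $j$ using \cite[Lemmas 3.4 and 3.5]{KnMi}. For ``$\subseteq$'' you must rule out subwords with $\delta(P)=\pi$ that use a position of $Q$ outside $\supp(\mathbf{x}_{P_r}/x_l)\cup\{a_1,\dots,a_r\}$ --- such positions exist whenever $r<n-\ell(\pi)+1$ --- which again needs an inductive deletion argument (for $|P|=\ell(\pi)+1$ it follows from the uniqueness statement in \cite[Lemma 3.5(2)]{KnMi} together with Lemma~\ref{idelta}, but the general case must be reduced to this). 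So the approach is salvageable but incomplete as it stands; the efficient repair is simply to insert \cite[Lemma 4.2]{KnMi} and compare Hilbert numerators, which is exactly what the paper does.
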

\begin{proof} By \cite[Lemma 4.2]{KnMi}, in the fine grading, the Hilbert numerator of $I_{\Delta^\vee}$ is
\begin{eqnarray*}
  \mathcal{K}_{I_{\Delta^\vee}}( t_1,\ldots,t_n) &=& \sum_\twoline{P \subseteq Q}
  {\delta(P)=\pi} (-1)^{|P|-\ell(\pi)} \mathbf{t}^P.
\end{eqnarray*}
where $\mathbf{t}^P=\prod\limits_{\sigma_i\in P}t_i$.
In $\mathbb{Z}$--grading, we have 
\[
  \mathcal{K}_{I_{\Delta^\vee}}(t) = \sum_\twoline{P \subseteq Q}
  {\delta(P)=\pi} (-1)^{|P|-\ell(\pi)} t^{|P|}=\sum_\twoline{P \subseteq Q}
  {\delta(P)=\pi}\sum^n_\twoline{j=\ell(\pi)}
{|P|=j} (-1)^{j-\ell(\pi)} t^j=\]
  \[=\sum_\twoline{P \subseteq Q}
  {\delta(P)=\pi}\sum^{n-\ell(\pi)}_\twoline{j=0}
  {|P|=j+\ell(\pi)} (-1)^{j} t^{j+\ell(\pi)}=\sum_{j=0}^{n-\ell(\pi)}(-1)^j m_{j+\ell(\pi)} t^{j+\ell(\pi)}\]
where we denoted by $m_{j+\ell(\pi)}$ the number of the subwords $P$ of $Q$ such that $\delta(P)=\pi$ and $|P|=j+\ell(\pi)$. Comparing with the formula from Corollary \ref{hilbd}, we obtain that $m_{j+\ell(\pi)}=\left(\twoline{r}{j+1} \right)$ for all $0\leq j\leq r-1$ and $m_{j+\ell(\pi)}=0$ for all $r\leq j\leq n-\ell(\pi)$.
\end{proof}

\begin{Corollary} Let $Q$ be a word in $W$ of size $n$ that contains $\pi$, $\Delta$ the subword complex $\Delta(Q,\pi)$ and $G(I_{\Delta^{\vee}})=\{\mathbf{x}_{P_1},\ldots,\mathbf{x}_{P_r}\}$ with $\mathbf{x}_{P_1}>_{lex}\ldots>_{lex}\mathbf{x}_{P_r}$, $r\leq n-\ell(\pi)+1$ and $d_r=r-1$. Then $\Delta$ is a simplicial sphere if and only if $r=n-\ell(\pi)+1$.
\end{Corollary}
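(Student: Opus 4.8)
The plan is to reduce the statement to the Knutson--Miller topological dichotomy for subword complexes together with the subword count established in Corollary \ref{numw}. Recall that Knutson and Miller proved \cite{KnMi} that $\Delta(Q,\pi)$ is always either a simplicial ball or a simplicial sphere, and that it is a simplicial sphere if and only if the Demazure product of the whole word satisfies $\delta(Q)=\pi$ (and a ball otherwise). So it suffices to show that, under the standing hypotheses of this section, $\delta(Q)=\pi$ holds if and only if $r=n-\ell(\pi)+1$.

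For this equivalence I would bring in the numbers $m_{j+\ell(\pi)}$ from the proof of Corollary \ref{numw}, that is $m_{j+\ell(\pi)}=\#\{P\subseteq Q \ :\ \delta(P)=\pi,\ |P|=j+\ell(\pi)\}$ for $0\le j\le n-\ell(\pi)$. The crucial elementary observation is that the only subword of $Q$ of size $n$ is $Q$ itself; hence $\delta(Q)=\pi$ is equivalent to $m_n\neq 0$, and in that case necessarily $m_n=1$. Now apply Corollary \ref{numw} with $j=n-\ell(\pi)$: if $r=n-\ell(\pi)+1$ then $n-\ell(\pi)=r-1$ lies in the range $0\le j\le r-1$, so $m_n=\left(\twoline{r}{r}\right)=1$; if instead $r\le n-\ell(\pi)$, then $n-\ell(\pi)$ lies in the range $r\le j\le n-\ell(\pi)$, so $m_n=0$.

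Combining the two ingredients settles both directions. If $r=n-\ell(\pi)+1$, then $m_n=1\neq 0$, hence $\delta(Q)=\pi$ and $\Delta$ is a simplicial sphere. Conversely, if $\Delta$ is a simplicial sphere, then $\delta(Q)=\pi$ by the Knutson--Miller characterization, hence $m_n\neq 0$; by the computation just made this rules out $r\le n-\ell(\pi)$, and since $r\le n-\ell(\pi)+1$ is part of the hypothesis we conclude $r=n-\ell(\pi)+1$. I expect the only delicate point to be quoting the Knutson--Miller ball/sphere theorem in the precise form needed; the remaining steps are immediate from Corollary \ref{numw} and the triviality that $Q$ is its own unique length-$n$ subword.
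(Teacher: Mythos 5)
Your proposal is correct and follows essentially the same route as the paper: both reduce to the Knutson--Miller criterion that $\Delta(Q,\pi)$ is a sphere exactly when $\delta(Q)=\pi$, identify this with the nonvanishing of the coefficient $m_n$ of $t^n$ in the Hilbert numerator (since $Q$ is its own unique subword of size $n$), and then read off from Corollary \ref{numw} that $m_n=\left(\twoline{r}{n-\ell(\pi)+1}\right)$ is nonzero precisely when $r=n-\ell(\pi)+1$. Your write-up is in fact slightly more explicit than the paper's, which states only the ``if'' direction of the Knutson--Miller dichotomy and leaves the case split on $j=n-\ell(\pi)$ implicit.
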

\begin{proof} By \cite[Corollary 3.8]{KnMi}, $\Delta$ is a simplicial sphere if $\delta(Q)=\pi$. Hence, in the Hilbert numerator, the coefficient of $t^{|Q|}$ must be non-zero. By Corollary \ref{numw}, the coefficient of $t^n$ is $m_n=\left(\twoline{r}{n-\ell(\pi)+1}\right)$. Hence $m_n\neq 0$ if and only if $r=n-\ell(\pi)+1$.
\end{proof}

\begin{Proposition}\label{complint} Let $\Delta$ be the subword complex $\Delta(Q,\pi)$ and let $n$ be the size of $Q$. Assume that $G(I_{\Delta^{\vee}})=\{\mathbf{x}_{P_1},\ldots,\mathbf{x}_{P_r}\}$ with $\mathbf{x}_{P_1}>_{lex}\ldots>_{lex}\mathbf{x}_{P_r}$, $r \leq n-\ell(\pi)+1$ and $d_r=r-1$. Then $k[\Delta]$ is a complete intersection ring.
\end{Proposition}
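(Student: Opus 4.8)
The plan is to convert the description of $I_{\Delta^\vee}$ from Lemma \ref{idelta} into an explicit description of $I_\Delta$ itself, and then to recognise $I_\Delta$ as a complete intersection of monomials supported on pairwise disjoint sets of variables.

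By Lemma \ref{idelta} (together with Lemma \ref{unic l}) there is a unique $l\in\supp(\mathbf{x}_{P_r})$ such that, writing $b_j=\min(P_j\setminus P_r)$ for $1\le j<r$ and $b_r=l$, one has $\mathbf{x}_{P_j}=x_{b_j}\,\mathbf{x}_{P_r}/x_l$ for every $j$. Set $u=\mathbf{x}_{P_r}/x_l$ and $C=\supp(u)=\supp(\mathbf{x}_{P_r})\setminus\{l\}$, a set of $\ell(\pi)-1$ indices, so that $\mathbf{x}_{P_j}=u\,x_{b_j}$ for all $j$. Since each $\mathbf{x}_{P_j}$ is squarefree, $b_j\notin C$; since the $\mathbf{x}_{P_j}$ are pairwise distinct, $b_j\neq l$ for $j<r$; and since $d_r=r-1$, Proposition \ref{set} gives $\{b_1,\ldots,b_{r-1}\}=\set(\mathbf{x}_{P_r})$, a set of $r-1$ elements, so $b_1,\ldots,b_{r-1}$ are distinct. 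Hence $b_1,\ldots,b_r$ are $r$ distinct indices lying outside $C$, and $\supp(\mathbf{x}_{P_j})=C\cup\{b_j\}$ is a disjoint union for each $j$.

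Next I would compute $I_\Delta$ from $I_\Delta=\bigcap_{F\in\mathcal F(\Delta)}(x_i:i\notin F)$. The facets of $\Delta$ are $Q\setminus P_j=[n]\setminus\supp(\mathbf{x}_{P_j})=[n]\setminus(C\cup\{b_j\})$, hence
\[I_\Delta=\bigcap_{j=1}^{r}\bigl((x_c:c\in C)+(x_{b_j})\bigr).\]
Each ideal in this intersection contains $(x_c:c\in C)$, so the intersection is the preimage under $R\to R/(x_c:c\in C)$ of $\bigcap_{j=1}^{r}(x_{b_j})$; because the $b_j$ are distinct, $\bigcap_{j=1}^{r}(x_{b_j})=(x_{b_1}\cdots x_{b_r})$, and pulling back gives
\[I_\Delta=(x_c:c\in C)+(x_{b_1}\cdots x_{b_r}).\]

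Finally, $I_\Delta$ is generated by the $\ell(\pi)-1$ variables $x_c$ ($c\in C$) and the single monomial $x_{b_1}\cdots x_{b_r}$, and this list is a regular sequence: $(x_c:c\in C)$ is generated by a subset of the variables, and modulo it the monomial $x_{b_1}\cdots x_{b_r}$ is nonzero (the $b_j$ lie outside $C$) hence a nonzerodivisor in the domain $k[x_i:i\notin C]$. Therefore $I_\Delta$ is a complete intersection ideal, of codimension $\ell(\pi)=n-\dim k[\Delta]$, and $k[\Delta]=R/I_\Delta$ is a complete intersection ring. The only point needing care is the second paragraph's bookkeeping that $b_1,\ldots,b_r$ are distinct and disjoint from $C$ — exactly where the hypothesis $d_r=r-1$ is used, through Proposition \ref{set} and Lemma \ref{idelta}; everything after that is formal.
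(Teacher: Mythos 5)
Your proof is correct and follows essentially the same route as the paper: both start from the description of $I_{\Delta^{\vee}}$ in Lemma \ref{idelta}, arrive at the presentation $I_{\Delta}=(x_{b_1}\cdots x_{b_r})+(x_c : c\in C)$, and conclude from the disjointness of $\{b_1,\ldots,b_r\}$ and $C$ that this is a complete intersection. The only (cosmetic) difference is that you compute $I_{\Delta}$ directly as an intersection over the facets, while the paper first writes the primary decomposition of $I_{\Delta^{\vee}}$ and then Alexander-dualizes; your version also spells out the bookkeeping (distinctness of the $b_j$ and $b_j\notin C$) that the paper leaves implicit.
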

\begin{proof} By Lemma \ref{idelta}, we have that there exists a unique integer $l$ such that \[I_{\Delta^{\vee}}=\frac{\mathbf{x}_{P_r}}{x_l}(x_{\min(P_1\setminus P_r)},\ldots,x_{\min(P_{r-1}\setminus P_r)},x_l).
\]
Hence
\[I_{\Delta^{\vee}}=\left(\bigcap_{k\in\supp\left(\mathbf{x}_{P_r}/x_l\right)}(x_k)\right)\cap\left(x_{\min(P_1\setminus P_r)},\ldots, x_{\min(P_{r-1}\setminus P_r)},x_l\right)
\]
and
	\[I_{\Delta}=\left(x_{\min(P_1\setminus P_r)}\ldots x_{\min(P_{r-1}\setminus P_r)}x_l\right)+\left(x_k:k\in\supp\left(\mathbf{x}_{P_r}/x_l\right)\right).
\]
Since $\supp(\mathbf{x}_{P_r}/x_l)\cap\left\{\min(P_1\setminus P_r),\ldots, \min(P_{r-1}\setminus P_r),l\right\}=\emptyset$, $I_{\Delta}$ is a complete intersection ideal. 
\end{proof}

\begin{Corollary} Let $\Delta$ be the subword complex $\Delta(Q,\pi)$ and suppose that the size of $Q$ is $n$. Assume that $G(I_{\Delta^{\vee}})=\{\mathbf{x}_{P_1},\ldots,\mathbf{x}_{P_r}\}$ with $\mathbf{x}_{P_1}>_{lex}\ldots>_{lex}\mathbf{x}_{P_r}$, $r\leq n-\ell(\pi)+1$ and $d_r=r-1$. Then there exists a unique integer $l$ such that the Koszul complex associated to the sequence $x_{\min(P_1\setminus P_r)}\ldots x_{\min(P_{r-1}\setminus P_r)}x_l,\ x_i\ :i\in\supp(\mathbf{x}_{P_r}/x_l)$ is the minimal graded free resolution of $k[{\Delta}]$.
\end{Corollary}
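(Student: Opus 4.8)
The plan is to deduce this directly from Proposition~\ref{complint}, using the standard fact that the Koszul complex on a regular sequence contained in the irrelevant maximal ideal of a polynomial ring is the minimal graded free resolution of the corresponding quotient ring.

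First I would invoke Proposition~\ref{complint} together with the uniqueness of $l$ furnished by Lemma~\ref{idelta} (which in turn rests on Lemma~\ref{unic l}) to write
\[
I_{\Delta}=\bigl(x_{\min(P_1\setminus P_r)}\cdots x_{\min(P_{r-1}\setminus P_r)}x_l\bigr)+\bigl(x_i\ :\ i\in\supp(\mathbf{x}_{P_r}/x_l)\bigr),
\]
and to recall the key combinatorial fact established there: the set of indices $\{\min(P_1\setminus P_r),\ldots,\min(P_{r-1}\setminus P_r),l\}$ occurring in the single ``long'' generator $g:=x_{\min(P_1\setminus P_r)}\cdots x_{\min(P_{r-1}\setminus P_r)}x_l$ is disjoint from $\supp(\mathbf{x}_{P_r}/x_l)$.

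Next I would check that the sequence $g,\ x_i\ (i\in\supp(\mathbf{x}_{P_r}/x_l))$ is a regular sequence in $R=k[x_1,\ldots,x_n]$. The variables $x_i$ with $i\in\supp(\mathbf{x}_{P_r}/x_l)$ plainly form a regular sequence, and the quotient $R/(x_i\ :\ i\in\supp(\mathbf{x}_{P_r}/x_l))$ is a polynomial ring in the remaining variables; by the disjointness just recalled, the image of $g$ in this polynomial ring is a nonzero nonunit, hence a nonzerodivisor. Consequently the Koszul complex $K_\bullet$ on this sequence is acyclic and resolves $R/I_{\Delta}=k[\Delta]$.

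Finally, every member of the sequence lies in the irrelevant maximal ideal $\mm=(x_1,\ldots,x_n)$, so all differentials of $K_\bullet$ have entries in $\mm$; therefore $K_\bullet\otimes_R k$ has trivial differentials, which is exactly the criterion for $K_\bullet$ to be the \emph{minimal} graded free resolution of $k[\Delta]$. The uniqueness of $l$ is inherited verbatim from Lemma~\ref{idelta}. There is no real obstacle here; the only point requiring any care is the disjointness of the two sets of variables, and that is precisely what was already proved in Proposition~\ref{complint}.
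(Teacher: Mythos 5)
Your proposal is correct and follows essentially the same route as the paper: the paper's proof simply reads off the generating set of $I_{\Delta}$ from Proposition~\ref{complint} and states that the claim follows, while you supply the standard details (regular sequence via disjointness of supports, minimality via entries in the irrelevant maximal ideal) that the paper leaves implicit.
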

\begin{proof} By Proposition \ref{complint}, we have that $G(I_{\Delta})=\{x_{\min(P_1\setminus P_r)}\ldots x_{\min(P_{r-1}\setminus P_r)}x_l,$ $x_i\ :i\in\supp(\mathbf{x}_{P_r}/x_l)\}$. The statement follows.
\end{proof}

\begin{Proposition} Let $\Delta$ be the subword complex $\Delta(Q,\pi)$, and let $n$ be the size of $Q$. Assume that $G(I_{\Delta^{\vee}})=\{\mathbf{x}_{P_1},\ldots,\mathbf{x}_{P_r}\}$ with $\mathbf{x}_{P_1}>_{lex}\ldots>_{lex}\mathbf{x}_{P_r}$, $r\leq n-\ell(\pi)+1$ and $d_r=r-1$. Then $k[\Delta^{\vee}]$ is Cohen--Macaulay if and only if $I_{\Delta^{\vee}}$ is a principal monomial ideal.
\end{Proposition}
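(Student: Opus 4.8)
The plan is to translate both conditions into the single numerical statement $r=1$, and then to compare $\depth k[\Delta^{\vee}]$ with $\dim k[\Delta^{\vee}]$.

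First note the easy translation on the algebraic side: since $|G(I_{\Delta^{\vee}})|=r$ by hypothesis, the ideal $I_{\Delta^{\vee}}$ is principal if and only if $r=1$, in which case $I_{\Delta^{\vee}}=(\mathbf{x}_{P_1})$ is generated by a single squarefree monomial and $k[\Delta^{\vee}]=R/(\mathbf{x}_{P_1})$ is a hypersurface ring, hence Cohen--Macaulay. So the proposition reduces to showing that $k[\Delta^{\vee}]$ is Cohen--Macaulay if and only if $r=1$.

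For this I would compute the two sides of the Cohen--Macaulay equality $\depth k[\Delta^{\vee}]=\dim k[\Delta^{\vee}]$. For the dimension: by Corollary \ref{ht} we have $\height I_{\Delta^{\vee}}=1$, so $\dim k[\Delta^{\vee}]=n-1$. For the depth: Corollary \ref{Betti} gives $\beta_i(I_{\Delta^{\vee}})=\binom{r}{i+1}$ for all $i$, hence $\projdim_R I_{\Delta^{\vee}}=r-1$; feeding this into the short exact sequence $0\to I_{\Delta^{\vee}}\to R\to k[\Delta^{\vee}]\to 0$ gives $\projdim_R k[\Delta^{\vee}]=r$, and the Auslander--Buchsbaum formula over the regular ring $R$ (of depth $n$) yields $\depth k[\Delta^{\vee}]=n-r$. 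Therefore $k[\Delta^{\vee}]$ is Cohen--Macaulay exactly when $n-r=n-1$, i.e.\ when $r=1$; combined with the previous paragraph this proves the proposition.

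I do not expect a real obstacle: the two substantive inputs, $\height I_{\Delta^{\vee}}=1$ and $\projdim_R I_{\Delta^{\vee}}=r-1$, are already in hand (Corollaries \ref{ht} and \ref{Betti}), and the remainder is the routine depth--dimension comparison. If one prefers to avoid Corollary \ref{Betti}, one may instead invoke Theorem \ref{Koszul}: multiplication by $\mathbf{x}_{P_r}/x_l$ identifies the Koszul complex on the regular sequence $x_{\min(P_1\setminus P_r)},\dots,x_{\min(P_{r-1}\setminus P_r)},x_l$ --- whose $r$ entries are pairwise distinct and, as noted in the proof of Proposition \ref{complint}, have support disjoint from that of $\mathbf{x}_{P_r}/x_l$ --- with the minimal graded free resolution of $k[\Delta^{\vee}]$, again giving $\projdim_R k[\Delta^{\vee}]=r$. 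The only point to check carefully is the boundary case $r=1$, where this ``sequence'' is just $x_l$ and $I_{\Delta^{\vee}}=(\mathbf{x}_{P_r})$ is genuinely principal, in agreement with the statement.
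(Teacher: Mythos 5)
Your argument is correct, but it is a genuinely different route from the paper's. The paper's proof goes through the Eagon--Reiner criterion applied to $\Delta^{\vee}$: $k[\Delta^{\vee}]$ is Cohen--Macaulay if and only if $I_{\Delta}$ has a linear resolution, hence is generated in a single degree; by Proposition \ref{complint} the generators of $I_{\Delta}$ are one monomial of degree $r$ together with the variables $x_k$, $k\in\supp(\mathbf{x}_{P_r}/x_l)$, so single-degree generation forces $r=1$, which is exactly principality of $I_{\Delta^{\vee}}$. You instead stay entirely on the $\Delta^{\vee}$ side and run the depth--dimension comparison: $\dim k[\Delta^{\vee}]=n-1$ from Corollary \ref{ht}, and $\depth k[\Delta^{\vee}]=n-r$ from $\projdim I_{\Delta^{\vee}}=r-1$ (Corollary \ref{Betti} or Theorem \ref{Koszul}) plus Auslander--Buchsbaum, so Cohen--Macaulayness is equivalent to $r=1$. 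Your version makes the quantitative content explicit (the Cohen--Macaulay defect is exactly $r-1$), at the price of invoking Auslander--Buchsbaum, while the paper's version is shorter once Proposition \ref{complint} is in hand. One caveat you inherit from the paper rather than introduce: both arguments tacitly need $\ell(\pi)\geq 2$, since when $\ell(\pi)=1$ the monomial $\mathbf{x}_{P_r}/x_l$ is $1$, Corollary \ref{ht} fails ($I_{\Delta^{\vee}}$ is then generated by $r$ distinct variables and has height $r$), and the proposition itself breaks ($k[\Delta^{\vee}]$ is a polynomial ring, hence Cohen--Macaulay, without $I_{\Delta^{\vee}}$ being principal); since your proof leans on Corollary \ref{ht} exactly as the paper's leans on Proposition \ref{complint}, this is not a defect of your reasoning relative to the source.
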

\begin{proof} By Eagon--Reiner theorem \cite{EaRe}, $k[\Delta^{\vee}]$ is Cohen--Macaulay if and only if $I_{\Delta}$ has a linear resolution. In particular, $I_{\Delta}$ is generated in one degree. The statement follows by Proposition \ref{complint}. 
\end{proof}

\end{document}